\newtheorem{theorem}{Theorem}
\newtheorem*{theorem*}{Theorem}
\newtheorem{lemma}[theorem]{Lemma}
\newtheorem{proposition}[theorem]{Proposition}
\newtheorem{corollary}[theorem]{Corollary}
\theoremstyle{definition}
\newtheorem{definition}[theorem]{Definition}
\newtheorem*{definition*}{Definition}
\newtheorem*{lemma*}{Lemma}
\theoremstyle{remark}
\newtheorem{remark}[theorem]{Remark}
\numberwithin{equation}{section}
\numberwithin{theorem}{section}
\newcommand{\PB}{\Pi}
\newcommand{\Sib}{\boldsymbol{\Sigma}}
\newcommand{\Ab}{\boldsymbol{A}}
\DeclareDocumentCommand\Pr{ m g }{%
    {   \IfNoValueTF {#2}
      {\mathbb{P}\left[{#1}\right]}
      {\mathbb{P}\left[{#1}\middle\vert{#2}\right]}%
    }
}
\DeclareDocumentCommand\E{ m g }{%
    {   \IfNoValueTF {#2}
      {\mathbb{E}\left[{#1}\right]}
      {\mathbb{E}\left[{#1}\middle\vert{#2}\right]}%
    }
}
\DeclareMathOperator{\supp}{supp}
\newcommand{\norm}[1]{\left\lVert#1\right\rVert}
\title[]{Non-Realizability of the Poisson Boundary}
\author[K.\ Chawla]{Kunal Chawla}
\author[J.\ Frisch]{Joshua Frisch}
\address[K.\ Chawla]{Princeton University} 
\address[J.\ Frisch]{UC San Diego}
\thanks{This was completed during a visit of the first author to UC San Diego.}
\date{\today}
\begin{document}
\begin{abstract}
	We show that for any countable group $ G $ equipped with a probability measure $ \mu $, there exists a randomized stopping time $ \tau $ such that $ (G, \mu _{\tau} )$ admits a strictly larger space of bounded harmonic functions than $ (G,\mu) $, unless this space is trivial for all measures on $ G $. In particular, we exhibit an irreducible probability measure on the free group $F_2$ such that the Poisson boundary is strictly larger than the geometric boundary equipped with the hitting measure, resolving a longstanding open problem. As another consequence, there is never a nontrivial universal topological realization of the Poisson boundary for any countable group. 
\end{abstract}
\maketitle

\section{Introduction}

Let $ G $ be a countable group equipped with a probability measure $\mu$. We say the measure $ \mu$ is irreducible if its support generates $ G$ as a group. A function $ h: G \to \mathbb{R} $ is \emph{$ \mu$-harmonic} if $ h $ satisfies the mean-value property $ h(x) = \sum_{g \in G} h(xg)\mu(g) $. The space of bounded $ \mu $-harmonic functions can be encoded as the space of bounded measurable functions on a canonical measure space associated to the pair $ (G,\mu)$. This space is known as the \emph{Poisson boundary} of $ (G,\mu)$, which we denote by $\PB(G,\mu)$.

Probabilistically, the Poisson boundary captures stochastically significant asymptotic events of the $ \mu $-random walk, which is the Markov chain $(w_i)_{i\geq 1}$ on $ G $ with transition probabilities $ p(g,h) = \mu(g ^{-1}h)$. 
Analytically, the Poisson boundary is a measure $ G $-space with the property that for any bounded $\mu$-harmonic function $ h $ on $ G $, there exists a bounded function $ f \in L ^{\infty}(\Pi(G,\mu), \nu) $ such that \[ h(g) = \int _{\PB(G,\mu)} f(\xi) dg_*\nu(\xi) .\] This is known as the \emph{Poisson representation formula}. 

 Formally, the Poisson boundary is the quotient of the space of $ \mu $-random walk sample paths $ (G ^{\mathbb{N}}, \mathbb{P})$ by the measurable hull of the orbit equivalence relation of the time shift. That is, two paths $ (w_i)_{i\geq 1}, (w'_i)_{i \geq 1} $ are equivalent if there exists $ k,k' \in \mathbb{N} $ such that $ w_{i+k} = w'_{i+k'}$ for all $ i \in \mathbb{N}$. We point the reader to \cite{kaimanovich1996boundaries,furman2002random, zheng2022asymptotic} for more information on Poisson boundaries.
 
The Poisson boundary was popularized in Furstenberg's 1963 paper \cite{furstenberg1963poisson}, though its history goes back to the work of Blackwell \cite{blackwell1955transient}, Feller \cite{feller1956boundaries}, and Doob \cite{doob1958discrete}. It has been a powerful tool to prove rigidity theorems \cite{furstenberg1967poisson} as well as understanding the large-scale geometry of groups \cite{erschler2004boundary, Bartholdiamenability2005, 
 amirliouville2016,erschler2020growth}.
 
Furstenberg observed that in many situations this abstract measure space can be realized by a natural  space associated with the group \cite{furstenberg1963poisson}. This observation immediately leads to the \emph{topological identification problem}: given a pair $ (G,\mu) $, can one find a concrete topological ``boundary" $ B $ of the group such that (i) the $\mu$-random walk on $G$ almost surely converges in the induced bordification, and (ii) the space $B$ with the hitting measure $\nu$ is the Poisson boundary of $(G,\mu)$. We contrast this with the \emph{identification problem}, which is formulated in the measure category and asks to show that a concrete $\mu$-boundary (a priori just an equivariant quotient of the Poisson boundary) is in fact isomorphic to the Poisson boundary (see \cite{kaimanovich1996boundaries, zheng2022asymptotic}). 

A prototypical example is the case of the free group $F_2$, where for every irreducible probability measure $\mu$, the $\mu$-random walk almost surely converges to the geometric boundary of the tree $\partial F_2$. This fact is essentially due to Furstenberg \cite[Section 4]{furstenberg1963noncommuting, furstenberg1967poisson}. It is then natural to ask under what conditions on $\mu$ this boundary is a model for the Poisson boundary. 

Over forty years ago, in their 1983 paper, Kaimanovich and Vershik asked whether the Poisson boundary of the free group can be identified with the geometric boundary for \emph{all} measures. When $\mu$ is supported on the generating set of $F_2$, this was shown to be true by Dynkin-Maljutov \cite{dynkin1961random}. Derrienic \cite{derriennic1975marche} extended their result to the case of $\mu$ with finite support. In a breakthrough paper by Kaimanovich \cite{kaimanovich2000poisson}, he showed that for any $\mu$ with finite entropy and finite logarithmic moment, the Poisson boundary of $(F_2,\mu)$ is the hyperbolic boundary equipped with the hitting measure. Finally, this was extended to all finite entropy measures in joint work of the authors with Forghani and Tiozzo \cite{chawla2022poisson}. 

This question has been reiterated numerous times in the past decade \cite{forghani2015transformed, forghani2019random, erschler2023arboreal, kaimanovich2024liouville}. 

Our primary goal is to resolve this question in the negative.

\begin{theorem}\label{thm:free-group}
	    There exists an irreducible probability measure on $ F _{2} $ such that $ (\partial F _{2}, \nu) $ is not the Poisson boundary of $ (F _{2}, \mu) $.
	  \end{theorem}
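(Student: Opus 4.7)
\medskip

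\textbf{Proof proposal.} The plan is to deduce Theorem \ref{thm:free-group} directly from the paper's abstract main stopping-time theorem, combined with the classical identification of the Poisson boundary of simple random walk on $F_2$ due to Dynkin--Maljutov. The high-level idea is this: if we can always strictly enlarge the Poisson boundary by passing to a randomized stopping time, then starting from a measure whose Poisson boundary equals $\partial F_2$ will produce a measure whose Poisson boundary is strictly larger than $\partial F_2$, while $\partial F_2$ nevertheless remains a $\mu$-boundary and hence a proper quotient.

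First, I would fix $\mu_0$ to be the uniform measure on the standard generating set $\{a^{\pm 1}, b^{\pm 1}\}$ of $F_2$. This measure is irreducible, and by Dynkin--Maljutov \cite{dynkin1961random} the Poisson boundary $\Pi(F_2, \mu_0)$ is isomorphic, as a measure $F_2$-space, to $(\partial F_2, \nu_0)$, where $\nu_0$ denotes the induced hitting measure. Crucially, this space is nontrivial, so the hypothesis of the main theorem is satisfied.

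Next, I would invoke the main theorem of the paper (stated in the abstract) applied to $(F_2, \mu_0)$. Since the space of bounded $\mu_0$-harmonic functions is nontrivial, this produces a randomized stopping time $\tau$ for the $\mu_0$-walk such that $\mu := (\mu_0)_\tau$ admits a strictly larger space of bounded $\mu$-harmonic functions than $(F_2, \mu_0)$; equivalently, $\Pi(F_2, \mu)$ strictly dominates $\Pi(F_2, \mu_0) \cong (\partial F_2, \nu_0)$. I would verify that $\mu$ is irreducible, which should follow from the irreducibility of $\mu_0$ together with a nondegeneracy property of $\tau$; if necessary, $\tau$ can be modified slightly (for instance by mixing with $\tau = 1$) to guarantee this without affecting the strict enlargement.

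Finally, to conclude I would check that $(\partial F_2, \nu)$ remains a $\mu$-boundary. Let $\tau_1 < \tau_2 < \cdots$ denote the iterated stopping times, so that the $\mu$-walk can be realized as the subsequence $(w_{\tau_k})_{k \geq 1}$ of the original $\mu_0$-walk. Since $(w_n)$ almost surely converges in $\partial F_2$, its subsequence $(w_{\tau_k})$ converges almost surely to the same limit, and the $\mu$-hitting measure on $\partial F_2$ equals $\nu_0$. Thus $(\partial F_2, \nu)$ is an $F_2$-equivariant measurable quotient of $\Pi(F_2, \mu)$; combined with the strict domination $\Pi(F_2, \mu) \supsetneq (\partial F_2, \nu_0)$, this shows $(\partial F_2, \nu)$ is a \emph{proper} quotient of $\Pi(F_2, \mu)$ and therefore not isomorphic to it. The substantive obstacle is the abstract main theorem itself — the construction of a randomized stopping time that strictly enlarges $\Pi$ whenever it is nontrivial — and I expect the free-group case to reduce formally to this general statement.
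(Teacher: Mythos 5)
Your proposal is correct, but it follows a route the paper deliberately does \emph{not} take in Section~2, even though the paper explicitly acknowledges it as valid (``it is also a corollary of the following significantly more general result''). You start from $\mu_0$ uniform on $\{a^{\pm 1},b^{\pm 1}\}$, cite Dynkin--Maljutov to identify $\PB(F_2,\mu_0)$ with $(\partial F_2,\nu_0)$, invoke Theorem~\ref{thm:always-bigger} to strictly enlarge the space of bounded harmonic functions, and observe that since the $\mu_\tau$-walk is a subsequence of the $\mu_0$-walk the hitting measure on $\partial F_2$ is unchanged; if $(\partial F_2,\nu_0)$ were the Poisson boundary of $(F_2,\mu_\tau)$, the Poisson integral formula would force $H_\infty(\mu_\tau)=H_\infty(\mu_0)$, a contradiction. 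This is sound (modulo the cosmetic point that the construction in Section~\ref{section:construction-of-mutau} begins by passing to the lazy walk $\frac12\mu_0+\frac12\delta_e$, which has the same bounded harmonic functions, so irreducibility of $\mu_\tau$ is automatic via $p(0)>0$). In contrast, the paper's Section~\ref{section:free-group-proof} proof is entirely self-contained: it starts directly with the lazy simple random walk, projects to the lamplighter $\mathbb{Z}\wr\mathbb{Z}/2\mathbb{Z}$, and shows by hand that a suitably chosen $\tau$ makes the pushed-forward lamp configuration stabilize (Proposition~\ref{prop:lamp-stabilization}), while $\pi_*\mu$ has trivial boundary on the lamplighter because it is finitely supported and symmetric — producing an explicit $\mu_\tau$-harmonic, non-$\mu$-harmonic pullback function. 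What your route buys is brevity and generality, at the cost of depending on the much heavier Theorem~\ref{thm:always-bigger} (switching elements, ladders, arboreal structures) and on the Dynkin--Maljutov identification; what the paper's route buys is a fully elementary, short, self-contained argument requiring neither, and it exhibits the mechanism (lamp stabilization vs.\ Choquet--Deny on the lamplighter) concretely rather than abstractly.
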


The primary source of difficulty is an utter lack of general techniques in the presence of infinite entropy. The use of entropic methods in this area was pioneered by Kaimanovich and Vershik \cite{kaimanovich1983random} and has been a central motif of the field. Indeed, with a few notable exceptions, some of which we describe below, almost all Poisson boundary identification results have only applied to measures which have finite entropy. Most have used the entropic criterion of Kaimanovich \cite{kaimanovich2000poisson}. All non-entropic results instead rely on specific structures of certain probability measures \cite{blackwell1955transient, dynkin1961random, erschler2023arboreal}. On the other hand, convergence to the boundary holds for all irreducible measures, even with infinite entropy, so it is plausible that the geometric boundary may still be a model for the Poisson boundary. We show that this is not the case. Our method even extends to the free semigroup, where the random walk exhibits no backtracking (see Remark \ref{remark:free-semigroup}).





While we give a self-contained proof of Theorem \ref{thm:free-group} in Section \ref{section:free-group-proof}, it is also a corollary of the following significantly more general result, which shows that for all countable groups (except those for which the Poisson boundary is always trivial), there is no hope to find a universal boundary for all irreducible measures. That is to say, the existence of the probability measure in Theorem \ref{thm:free-group} has little to do with the structure of the free group, but is actually emblematic of a much more general phenomenon.

To state it, we require some definitions.

\begin{definition} 
	 A nonnegative integer-valued random variable $ \tau $ is a \emph{randomized stopping time} with respect to a filtration $ (\mathcal{F}_i) _{i \geq 1}  $ if there is a collection of i.i.d. Uniform$([0,1])$ random variables $ (U_i) _{i \geq 1}$ such the event $ \{ \tau \leq n \} $ is measurable with respect to the $\sigma$-algebra $ \sigma(\mathcal{F}_n, U_1, ..., U_n)$.
	  \end{definition}
 Given a probability measure $ \mu $, a random variable $ \tau $ is a \emph{randomized stopping time for the $ \mu $-random walk} if the previous definition holds with $ \mathcal{F}_n = \sigma(w_1,...,w_n)$. We denote by $ \mu_\tau $ the law of the $ \mu $-random walk stopped at the random time $\tau$. Stopping times have been a useful tool in the area since its conception, for example in work of Furstenberg \cite[Section 4.3]{furstenberg1971poisson} to relate the Poisson boundary of groups and their corecurrent subgroups, and Kaimanovich-Vershik \cite[Proposition 6.3]{kaimanovich1983random} to show triviality of the boundary for random walks on lamplighters over $\mathbb{Z}$ and $\mathbb{Z}^2$. 
 
 Randomized stopping times in particular have been used by Kaimanovich \cite{kaimanovich1992discretization} to show that Furstenberg's discretization of Brownian motion does not change the Poisson boundary \cite{furstenberg1971poisson} (see also \cite[Section 8]{lyons1984function}). Forghani and Kaimanovich prove in upcoming work  \cite{forghanikaimanovichstopping}   (though proofs are included in Forghani's thesis \cite{forghani2015transformed}) that under some mild moment assumptions on a randomized stopping time $\tau$, the pairs $(G,\mu)$ and $(G, \mu_\tau)$ have the same Poisson boundary. They also claim that standard stopping times always preserve the Poisson boundary, though there is a subtle error in the proof (see Remark \ref{remark:error}). We point out the distinction of randomized stopping times from stopping times, in that they can use randomness external to the $\mu$-random walk, and more discussion of their differences in the context of random walks on groups is available in \cite{forghani2015transformed}. There are many works exploring the relation between asymptotic properties of $(G,\mu)$ and $(G,\mu_\tau)$ under various assumptions on the stopping time $\tau$ \cite{kaimanovich1983differential,hartman2014abramov, forghani2015transformed, forghani2017asymptotic, forghani2019positive}, and we point the reader to \cite{forghanikaimanovichstopping, forghani2015transformed} for more detailed history of the subject.

We also require an algebraic property, whose relation with the Poisson boundary has been observed and exploited in numerous previous works \cite{jaworski2004countable,frisch2019choquet,erschler2023arboreal}. 

 \begin{definition} 
	   A non-trivial group $ G $ has the \emph{infinite conjugacy class property}, or ICC property, if each of its non-trivial elements has an infinite conjugacy class.
	   \end{definition}

It is a theorem of Jaworski \cite{jaworski2004countable} that if a group does not have an ICC quotient, then the Poisson boundary $ \PB(G,\mu) $ is trivial for every irreducible measure $ \mu $ (in the finitely generated case, this was previously proven by Dynkin-Maljutov \cite{dynkin1961random}). Such a group is called \emph{Choquet-Deny}. It was recently shown that for every group with an ICC quotient, there is a probability measure with a non-trivial Poisson boundary \cite{frisch2019choquet}.

A probability measure is said to be non-degenerate if $\supp \mu$ generates $G$ as a semigroup. A measure is symmetric if $\mu(g) = \mu(g^{-1})$ for all $g\in G$. 

\begin{theorem}
	Let $G$ be a countable group with an ICC quotient, and $\mu$ be any irreducible probability measure on $G$. Then there exists a randomized stopping time $\tau$ for the $ \mu $-random walk such that the Poisson boundary of $(G,\mu_\tau)$ is strictly larger than that of $(G, \mu)$. If $ \mu $ is symmetric or non-degenerate, $ \mu_{\tau} $ can be made so as well. 
\label{thm:always-bigger}
\end{theorem}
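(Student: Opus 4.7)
My plan is to reduce to the ICC case and then adapt the Frisch-Hartman-Tamuz-Vahidi Ferdowsi construction to produce a randomized stopping time whose stopped measure has strictly larger Poisson boundary.

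\emph{Reduction.} Let $N \lhd G$ be a normal subgroup with $Q = G/N$ ICC (exists by hypothesis), and let $\pi : G \to Q$, $\bar\mu = \pi_*\mu$. Suppose one finds a randomized stopping time $\bar\tau$ for the $\bar\mu$-walk on $Q$ with $\PB(Q,\bar\mu_{\bar\tau})$ strictly containing $\PB(Q,\bar\mu)$. Then $\bar\tau$ lifts to a randomized stopping time $\tau$ on $G$ (apply $\bar\tau$ to the projected walk; this is measurable with respect to the required $\sigma$-algebra), and $\pi_*(\mu_\tau) = \bar\mu_{\bar\tau}$. Pulling back a bounded function on $Q$ that is $\bar\mu_{\bar\tau}$-harmonic but not $\bar\mu$-harmonic yields one on $G$ that is $\mu_\tau$-harmonic but not $\mu$-harmonic. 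So it suffices to assume $G$ itself is ICC.

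\emph{Construction.} Using the ICC property, in the spirit of \cite{frisch2019choquet}, I would construct a sequence of elements $(g_n) \subset G$ whose conjugates by random products of previously chosen elements remain pairwise distinct, together with bounded functions on $G$ that detect the resulting "type" of a long trajectory. I would then design $\tau$ so that in each macroscopic $\mu_\tau$-step, external randomness chooses a specific conjugate of some $g_n$, and the $\mu$-walk waits to realize that conjugate as a hitting time (using irreducibility, which guarantees $\mu^{*n}(x) > 0$ for some $n$ for each target $x \in G$, so that the combined hitting/coin-flip recipe produces $\tau < \infty$ almost surely). The resulting $\mu_\tau$-walk carries an asymptotic "type" statistic recording the limiting behavior of the externally chosen conjugates, and I would take the new bounded harmonic function $h^*$ to be the conditional expectation of a bounded function of this statistic given the walk's current position. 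By design, $h^*$ is $\mu_\tau$-harmonic; and because the statistic is driven by external randomness not visible to the original $\mu$-walk, combined with a conjugation structure that (by ICC) the $\mu$-walk alone cannot resolve, $h^*$ fails to be $\mu$-harmonic.

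\emph{Extensions and obstacles.} For symmetric $\mu$, symmetrize the construction by pairing each $g_n$ with $g_n^{-1}$ in the choice tree, so that $\mu_\tau(g) = \mu_\tau(g^{-1})$; for non-degenerate $\mu$, mix $\tau$ with the trivial stopping time $\tau \equiv 1$ via external randomness so that $\supp \mu \subseteq \supp \mu_\tau$. The main obstacle is to verify that the encoded type statistic is (i) shift-invariant and well-defined almost surely for the $\mu_\tau$-walk — which requires Borel-Cantelli arguments plus irreducibility to ensure that the targeted conjugates are actually realized within each macroscopic step — and (ii) not measurable in the tail $\sigma$-algebra of the $\mu$-walk alone. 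Point (ii) is the crux of the proof: the ICC hypothesis must be used to force distinct external-randomness choices to yield asymptotic trajectories with labels that cannot be reconstructed from the $\mu$-walk after forgetting the auxiliary uniforms. Once this rigidity is established, the strict inclusion $\PB(G,\mu) \subsetneq \PB(G,\mu_\tau)$ follows.
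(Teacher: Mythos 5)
Your reduction to the ICC case is correct and matches the paper's (via the hyper-FC-centre). The high-level intuition — use ICC to build ``types'' distinguishable by conjugation, encode external randomness into the stopped measure, and extract a new harmonic function — is also aligned with the paper's strategy. However, there are three concrete gaps in the construction and in the key step.

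First, irreducibility (support generates $G$ as a \emph{group}) does not imply that for every $x \in G$ there exists $n$ with $\mu^{*n}(x) > 0$; that is non-degeneracy (support generates $G$ as a \emph{semigroup}). Take $G = \mathbb{Z}$ and $\mu = \tfrac12(\delta_0 + \delta_1)$: this is irreducible, but no negative integer is ever hit. So your recipe of targeting a specific conjugate and ``waiting to realize that conjugate as a hitting time'' already breaks down at the level of hittability. Second, even under non-degeneracy, the first hitting time of a fixed element need not be almost surely finite for a transient walk, so your $\tau$ is not obviously a.s.\ finite. The paper sidesteps both issues by never targeting a single element: Lemma~\ref{lemma:stopping-at-switcher} stops the walk at a randomly thinned but \emph{infinite} set of switching elements, and the almost-sure finiteness of the stopping time follows from Lemma~\ref{lemma:RW-superswitching}, which shows $\mathbb{P}(w_n \text{ is } F\text{-switching}) \to 1$ via a quantitative decay estimate (Lemma~\ref{lemma:Linf-decay}) for visits to coset unions of infinite-index centralizers — an ergodic-theoretic argument you would need a replacement for.

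Third and most importantly, the crux of your argument — that the asymptotic ``type'' statistic is not measurable in the tail of the $\mu$-walk alone — is asserted but not proved; you acknowledge this. The paper does not attempt a direct non-measurability argument at all. Instead it assembles the stopped measures $\mu_{\tau_i}$ (mixed by a measure $p$ with eventually simple records) into a \emph{ladder} in the sense of Erschler--Kaimanovich, so that the $\mu_\tau$-walk a.s.\ converges to the boundary of an explicit ladder forest, and then shows directly that the explicit $\mu_\tau$-harmonic function $h_n$ (the boundary-conditioned probability that the first increment had ``type'' $n$) violates the $\mu$-mean-value equation: one applies the Optional Stopping Theorem to $\tau_n$ and lower-bounds $\sum_g \mu_{\tau_n}(g)h_n(g)$ via a Cauchy--Schwarz estimate on $\|d\nu_n/d\nu\|_2$. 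This is a strictly constructive route that avoids any measurable-tail argument. Without that machinery (switching sets, the record-time structure, the arboreal framework, the Optional Stopping/Cauchy--Schwarz step), your outline does not close; with it, you would essentially be reproducing the paper's proof.
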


 We remark that a finitely generated group $G$ has an ICC quotient if and only if it is not virtually nilpotent \cite{frisch2018non,duguid1956fc, mclain1956remarks}, so Theorem \ref{thm:always-bigger} applies to all finitely generated groups of superpolynomial growth. In particular, there is a symmetric and non-degenerate measure on the free group whose Poisson boundary is not the geometric boundary. By Jaworski's theorem \cite{jaworski2004countable}, our Theorem \ref{thm:always-bigger} implies that unless the Poisson boundary is always trivial, one can make it strictly larger via transforming $ \mu $ by a randomized stopping time. In fact, it was explicitly asked in the thesis of Forghani whether randomized stopping times always preserve the Poisson boundary \cite[Question 4.3.1]{forghani2015transformed}; we answer this question in the negative.

 This is the first non-realizability result for the Poisson boundary. It was previously unknown whether all groups admitted a universal realization of their Poisson boundary. We not only exhibit an example where this is impossible, but show that it is never possible, unless the Poisson boundary is trivial for all measures. In other words, the identification problem has no universal solution for any group, in the sense that for any group which is not Choquet-Deny, and any irreducible measure $\mu$, there is a randomized stopping time $\tau$ such that the Poisson boundary $\PB(G, \mu)$ is a proper quotient of $\PB(G,\mu_\tau)$.

As one corollary, we show that main result of \cite{frisch2019choquet} can be achieved via the transformation of \emph{any} irreducible measure by a randomized stopping time, although this new measure may have infinite entropy.
 
\begin{corollary}
	For any countable group $ G $ with an ICC quotient equipped with an irreducible probability measure $\mu$, there exists a randomized stopping time $\tau$ such that $(G, \mu_\tau)$ has a non-trivial Poisson boundary.
\end{corollary}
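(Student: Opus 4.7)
The plan is to deduce this corollary directly from Theorem \ref{thm:always-bigger}, dividing into two cases based on whether $\PB(G,\mu)$ is already non-trivial. In the first case, it suffices to take $\tau$ to be the deterministic stopping time $\tau \equiv 1$, so that $\mu_\tau = \mu$ and the conclusion holds tautologically; deterministic times are in particular randomized stopping times (with respect to a trivial collection of auxiliary uniforms).

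In the second case, where $\PB(G,\mu)$ is trivial, I would apply Theorem \ref{thm:always-bigger} to the pair $(G,\mu)$, producing a randomized stopping time $\tau$ for which $\PB(G,\mu_\tau)$ is strictly larger than $\PB(G,\mu)$ as a $G$-equivariant measurable space. Since the latter is a point and a one-point $G$-space has no proper $G$-equivariant measurable extension that is still a one-point space, $\PB(G,\mu_\tau)$ must be non-trivial, which is exactly what we want.

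There is no real obstacle to overcome here: the corollary is essentially a specialization of Theorem \ref{thm:always-bigger} to the case where the original Poisson boundary happens to be trivial, and the only substantive content---namely, the existence of the randomized stopping time $\tau$---is supplied entirely by the theorem. The one technical point worth noting is simply that ``strictly larger'' is interpreted in the category of $G$-equivariant measurable quotients, so that a strict extension of the trivial boundary is automatically non-trivial.
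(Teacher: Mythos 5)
Your proof is correct, and the content is essentially the same as what the paper leaves implicit. However, the case distinction you make is not actually needed: in both cases you could simply apply Theorem \ref{thm:always-bigger} directly to $(G,\mu)$ to obtain $\tau$ such that $\PB(G,\mu_\tau)$ is strictly larger than $\PB(G,\mu)$ (as a $G$-equivariant measurable quotient), and then observe that any measure $G$-space admitting a \emph{proper} quotient cannot be a single point, so $\PB(G,\mu_\tau)$ is automatically non-trivial. Your Case 1 (taking $\tau\equiv 1$ when $\PB(G,\mu)$ is already non-trivial) is valid but unnecessary, since the theorem applies equally well in that case. One small point worth being explicit about, which you handle correctly in spirit: "strictly larger" means $\PB(G,\mu)$ is a proper equivariant measurable quotient of $\PB(G,\mu_\tau)$, and a one-point space has no proper quotients, which is what makes the reduction work.
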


\begin{remark} 
	The authors of \cite{frisch2019choquet} actually construct a measure $ \mu $ which is fully supported in the sense that $\mu(g)>0$ for all group elements $g\in G$. Examining our construction of the randomized stopping time $ \tau $ in section \ref{section:construction-of-mutau}, we see that for any fully supported measure $ \mu $, the measure $ \mu_\tau $ coming from Theorem \ref{thm:always-bigger} is also fully supported. On the other hand, our construction when applied to the free group will always produce infinite entropy measures, by \cite[Theorem 1.1]{chawla2022poisson}. We expect our general construction to always produce infinite entropy measures.
	  \end{remark}

	  As mentioned earlier, Theorem \ref{thm:always-bigger} implies the topological identification problem never has a universal solution without additional conditions on the measure. To elaborate, we define a bordification of $ G$ to be a space $ G\cup B$ where the action of $ G $ on itself extends by homeomorphisms to $G \cup B$. We say that a bordification of $ G $ is a \emph{candidate boundary} if there exists an irreducible probability measure $\mu$ such that the $\mu$-random walk on $G$ almost surely converges in $B$. We say a measure $\mu'$ is \emph{compatible} with $B$ if the $\mu'$-random walk almost surely converges to a unique point in $B$. This defines a hitting measure $\nu$ on $B$ by $\nu(U) := \mathbb{P}(\lim_{n\to \infty} w_n \in U)$.

	  \begin{corollary}
      Let $G$ be a countable discrete group with an ICC quotient. For every candidate boundary $B$, there is a compatible probability measure $\mu$ such that $B$ equipped with the hitting measure is not the Poisson boundary of $(G,\mu).$\end{corollary}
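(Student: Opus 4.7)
My plan is to derive this corollary from Theorem \ref{thm:always-bigger}, using the observation that the topological limit (and hence the hitting measure on a bordification) is preserved when one samples the random walk at a randomized stopping time. I would start by invoking the definition of a candidate boundary to fix an irreducible probability measure $\mu_0$ compatible with $B$; let $\nu_0$ denote its hitting measure on $B$. Then $(B,\nu_0)$ is automatically a $\mu_0$-boundary, that is, a $G$-equivariant measurable quotient of $\PB(G,\mu_0)$. If this quotient is already proper, then $\mu := \mu_0$ suffices, so I may assume $(B,\nu_0) = \PB(G,\mu_0)$ as measure $G$-spaces.

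Next, since $G$ has an ICC quotient, I would apply Theorem \ref{thm:always-bigger} to produce a randomized stopping time $\tau$ for the $\mu_0$-random walk for which $\PB(G,\mu_0)$ is a proper $G$-equivariant quotient of $\PB(G, \mu_{0,\tau})$, and set $\mu := \mu_{0,\tau}$ (which remains irreducible since $\mu_0$ is). The remaining work is to verify that $\mu$ is compatible with $B$ and has hitting measure $\nu_0$. This is essentially formal: a realization of the $\mu$-random walk is a subsequence $(w_{T_k})_{k \geq 0}$ of the $\mu_0$-walk $(w_n)_{n \geq 0}$, where $T_k$ is a sum of $k$ successive (a.s.\ finite) randomized stopping times. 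Since $w_n \to \xi$ almost surely in the bordification $G \cup B$ with $\xi \sim \nu_0$, the subsequence tends to the same limit, giving the same hitting distribution. Thus the hitting measure of $\mu$ on $B$ is $\nu_0$, and $(B,\nu_0) = \PB(G, \mu_0)$ is a proper $G$-equivariant quotient of $\PB(G,\mu)$, so $B$ with the hitting measure is not the Poisson boundary of $(G,\mu)$.

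The only substantive ingredient is the appeal to Theorem \ref{thm:always-bigger}; the rest is merely a consistency check that randomized stopping times preserve topological limits of the walk. In this sense the only ``obstacle'' is the main theorem itself, and this corollary is simply its translation from the measure-theoretic identification problem to the topological realization problem.
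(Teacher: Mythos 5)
Your proposal is correct and follows essentially the same route as the paper: fix a compatible measure $\mu_0$, reduce to the case $(B,\nu_0)=\PB(G,\mu_0)$, apply Theorem \ref{thm:always-bigger}, and observe that the randomized stopping time preserves a.s.\ convergence in $G\cup B$ and hence the hitting measure. The only cosmetic difference is that the paper concludes via the Poisson representation formula (the new $\mu_\tau$-harmonic function cannot be realized by a Poisson integral over $(B,\nu)$, since that would force it to be $\mu$-harmonic), whereas you phrase the same fact as $(B,\nu_0)$ being a proper $G$-quotient of $\PB(G,\mu_{0,\tau})$; these are equivalent.
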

      \begin{proof}
      Since $B$ is a candidate boundary, there exists an irreducible measure $\mu$ which is compatible with $B$. If $(B,\nu)$ is not the Poisson boundary of $(G,\mu)$, we are done. If it is, let $\mu_\tau$ be the measure coming from Theorem \ref{thm:always-bigger}. Since $\mu_\tau$ is a randomized stopping time transformation of $\mu$, then the $\mu_\tau$-random walk almost surely converges and has the same hitting measure $(B,\nu)$. Since there exists a bounded $\mu_\tau$-harmonic function that is not $\mu$-harmonic, we deduce that not every bounded $\mu_\tau$-harmonic function can be realized by the Poisson integral formula on $(B,\lambda)$.
      \end{proof}

      It has been emphasized that the Poisson boundary is an inherently measure-theoretic object, and that "attempts to treat the Poisson boundary as a topological space... shroud its true nature" \cite{kaimanovich1996boundaries}. Our theorem underscores this point: the Poisson boundary never admits a topological realization for all measures.

	  As mentioned previously, results in the field so far have largely relied on entropic methods--in particular, Kaimanovich's entropy criterion for identification of the Poisson boundary \cite{kaimanovich2000poisson} (the first proof of which appeared in \cite{kaimanovich1985entropy}). These most notably include (i) groups with hyperbolic properties \cite{ledrappier1985poisson,kaimanovich2000poisson,maher2018random,horbezoutfn,chawla2022poisson} as well as (ii) wreath products and their variants \cite{erschler2011poisson, lyons2021poisson, erschler2022poisson, erschler2023poisson, silva2024poisson, vio2025poisson} (see \cite{zheng2022asymptotic,frisch2023poisson} and references therein for more examples). 
      
      There are however, a few notable exceptions. First is the original proof of Kaimanovich and Vershik that amenable groups admit irreducible measures with trivial boundary \cite[Theorem 4.4]{kaimanovich1983random}. Second is the work of Andrei Alpeev \cite{alpeev2025characterization}, who showed that if a group is not $C^*$-simple, then the action on the Poisson boundary is not essentially free for a generic measure on the group. In addition there is the work of Alpeev \cite{alpeev2021examples} and Kaimanovich \cite{kaimanovich1983examples} who showed for infinite entropy measures that the left random walk can have a trivial Poisson boundary even if the right one is nontrivial, and that the Poisson boundary need not split over products in the presence of infinite entropy \cite{alpeev2024secret,kaimanovich2024liouville}.
      Finally, we wish to highlight the recent paper of Erschler and Kaimanovich who, on any ICC group, construct a family of forests and associated probability measures such that the corresponding random walks in some sense resemble the simple random walk on a tree \cite{erschler2023arboreal}. As a result, the boundary of these forests provide models of the Poisson boundary for those particular measures.

	  We leverage Erschler and Kaimanovich's framework in the proof of Theorem \ref{thm:always-bigger}. They remark that the boundaries they construct do not have any common underlying space, and ``this absence of universality seems to be the first example of this kind". In the case of finite entropy measures on the free group, these boundaries coincide up to a conull set with the geometric boundary (see \cite{chawla2022poisson}).

      We prove that in infinite entropy, there are infinitely many non-isomorphic Poisson boundaries on the same group. To be precise, we use Erschler and Kaimanovich's framework to construct a boundary for the transformed measure $ \mu _{\tau} $, and show that this new boundary is strictly larger than the Poisson boundary of $ \mu $. We also show that this boundary is the Poisson boundary of $ \mu_\tau $ (see Corollary \ref{cor:PB-identification}). 

    We now explain the key observation behind our construction. Let $\mu$ be the measure driving the lazy simple random walk on $F_2$. If $\tau$ is a randomized stopping time for the $\mu$-random walk, then the $\mu$-random walk and the $\mu_\tau$-random walk induce the same hitting measure $\nu$ on the boundary. If $(\partial F_2, \nu)$ is the Poisson boundary of $(F_2, \mu_\tau)$, then the Poisson integral formula implies both $\mu$ and $\mu_\tau$ admit exactly the same collection of bounded harmonic functions. Hence to show that $(\partial F_2, \nu)$ is not the Poisson boundary, it suffices to exhibit a bounded function that is $\mu_\tau$-harmonic but not $\mu$-harmonic. In other words, we completely bypass the need to work with the conditional random walk or with any entropic criteria. 

	  The paper is organized as follows. In Section \ref{section:free-group-proof} we provide a self-contained proof of Theorem \ref{thm:free-group}. In Section \ref{section:switching-elements} we prove a lemma on the abundance of ``switching" elements, akin to \cite{frisch2019choquet}, \cite{erschler2023arboreal}, \cite{gorokhovsky2024quantitative}, and \cite{le2018subgroup}. In Section \ref{section:arboreal} we recall the framework of Erschler and Kaimanovich and use our lemma to construct the measure $\mu_\tau$. Finally in Section \ref{section:final-proofs} we prove Theorem \ref{thm:always-bigger} by exhibiting an explicit $\mu_\tau$-harmonic function that is not $\mu$-harmonic.

\section{Proof of Theorem \ref{thm:free-group}}\label{section:free-group-proof} 

We let $\mu$ be the measure driving the lazy simple random walk on $F_2$, that is $\mu= \frac{1}{2}\delta_e + \frac{1}{8} \sum_{s \in \{a^{\pm}, b^{\pm}\}} \delta_s$. We will construct a randomized stopping time $\tau$ for the $\mu$-random walk. 

Our randomized stopping time $\tau$ will be a mixture of a countable collection of stopping times $(\tau_i)_{i\geq 0}$, weighted by an auxiliary probability measure $p$ on the natural numbers $\mathbb{N}$. That is to say, one step of the $\mu_\tau$ random walk can be sampled by first sampling some $i \sim p$, then sampling a group element according to the stopped measure $\mu_{\tau_i}$. In particular, we can write $\mu_\tau = \sum_{i\geq 0} p(i)\mu_{\tau_i}$. For the remainder of this section, we will specify what properties we want $\tau$ to satisfy, and then we will construct $\tau$ via a judicious choice of $p$ and a sequence of stopping times $(\tau_i)_{i\geq 0}$.

\subsection{A reduction to the lamplighter group}

Recall the lamplighter group $ \mathbb{Z} \wr \mathbb{Z}/2 \mathbb{Z}$, defined as the semidirect product $\mathbb{Z} \ltimes \bigoplus _{i \in \mathbb{Z}} \mathbb{Z}/2 \mathbb{Z}   $, where the action of $ \mathbb{Z} $ is by shifting the index. Elements of this group are pairs $(X,  \varphi)$ where $ \varphi $ is a finitely supported function $ \varphi: \mathbb{Z}\to \mathbb{Z}/2 \mathbb{Z} $ and $ X $ is an integer.

We consider the homomorphism $ \pi $ from $F _{2} $ to the lamplighter $ \mathbb{Z} \wr \mathbb{Z}/2 \mathbb{Z} $ induced by setting $ \pi (a) = (e _{0}, 0) $ and $ \pi (b) = (0, 1) $, where $ e _{0}(x) = 1  $ if $ x=0 $ and $ e_0(x)=0$ otherwise.

Our choice of $ p $ and $ (\tau _{i}) _{i\geq 0} $ will be to ensure that the following proposition holds:

\begin{proposition}\label{prop:lamp-stabilization}
The Poisson boundary of $ ( \mathbb{Z} \wr \mathbb{Z}/2 \mathbb{Z}, \pi _{*}  \mu_\tau) $ is nontrivial.
	  \end{proposition}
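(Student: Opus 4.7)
My plan is to choose $(p, (\tau_i))$ so that under $\pi_*\mu_\tau$ the lamplighter random walk exhibits \emph{lamp stabilization}: for every $k \in \mathbb{Z}$, almost surely only finitely many steps of the walk affect the lamp at position $k$. Granted this, the limit $\varphi_\infty(0) := \lim_n \varphi_n(0)$ is well-defined, and the function
\[
h(x, \varphi) \;:=\; \Pr{\varphi_\infty(0) = 1}{w_0 = (x, \varphi)}
\]
is bounded and $\pi_*\mu_\tau$-harmonic on $\mathbb{Z}\wr \mathbb{Z}/2\mathbb{Z}$. Because the lamp flips performed during the walk depend only on the increments (not on the starting lamp configuration), flipping $\varphi(0)$ flips $\varphi_\infty(0)$ pointwise, so $h(0, e_0) = 1 - h(0, 0)$. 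Provided $h(0,0) \neq 1/2$, the function $h$ is a nonconstant bounded harmonic function and the Poisson boundary is nontrivial.

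To arrange lamp stabilization, I would design each $\tau_i$ so that, when projected to the lamplighter, $\mu_{\tau_i}$ mimics a \emph{positive excursion of height $i$}: its $\mathbb{Z}$-coordinate is exactly $+i$ and its lamp flips are supported in $[0, i]$ relative to the starting site. Given such building blocks, setting $\mu_\tau = \sum_i p(i)\mu_{\tau_i}$ yields a $\pi_*\mu_\tau$-walk whose $\mathbb{Z}$-coordinate $X_n = \sum_{j \leq n} I_j$ is strictly increasing (since each $I_j \geq 1$) and whose step-$n$ lamp flips sit inside $[X_{n-1}, X_{n-1}+I_n]$. In particular, no step ever affects a lamp strictly to the left of $X_{n-1}$, so the lamp at any fixed $k_0$ is touched only by the finitely many $n$ with $X_{n-1} \leq k_0$; lamp stabilization is immediate. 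For the simplest choice $p = \delta_1$, the lamp at $0$ is affected only by the first step, and a short calculation for the lazy $F_2$-walk conditioned on exiting at $+1$ shows that the number of $a^{\pm}$-letters deposited at $0$ has even parity with probability $2/3$, so $h(0,0) = 1/3 \neq 1/2$, confirming that $h$ is genuinely nonconstant.

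The main obstacle is realizing a $\tau_i$ with the desired positive-excursion output as an honest randomized stopping time for the $\mu$-walk. The naive first-passage time to $+i$ is unsuitable: conditional on hitting $+i$, the underlying walk typically dips deep into negative $\mathbb{Z}$-territory first, producing an infinite-mean backward reach and destroying the containment of lamp flips that drives the argument above. The remedy is to exploit the external uniform variables built into the definition of randomized stopping times to implement a rejection-and-restart scheme: one runs the underlying walk and uses the $U_i$ to abort and reinitialize any attempt whose $\mathbb{Z}$-projection threatens to dip below its starting level before reaching $+i$. Calibrating the rejection probabilities so that $\tau_i$ is almost surely finite and $\mu_{\tau_i}$ is supported on positive-excursion samples is the technical core; once that is done, the lamp stabilization argument and the harmonic function $h$ close out the proof.
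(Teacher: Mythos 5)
Your lamp-stabilization scheme (monotone $\mathbb{Z}$-coordinate, each increment's lamps confined to $[X_{n-1},X_n]$) would indeed make the lamp configuration converge and give a nonconstant harmonic function, and the observation $h(0,e_0)=1-h(0,0)$ is a clean way to certify nonconstancy. But the construction of $\tau_i$ with positive-excursion output is where the argument breaks. The set $S_i=\{(i,\psi):\supp\psi\subseteq[0,i]\}$ is \emph{not} a recurrent set for the lamplighter projection of the $\mu$-walk: by time $n$ the walk has visited on the order of $\sqrt n$ sites, each carrying an essentially fair lamp, so the probability of every lamp outside $[0,i]$ being off decays super-polynomially, and in fact the walk hits $S_i$ only finitely often almost surely --- and with positive probability never. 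Therefore ``stop at the first visit to $\pi^{-1}(S_i)$'' is not an almost surely finite stopping time, and no amount of calibration of acceptance probabilities $U_n$ fixes this, because a randomized stopping time can only \emph{decide whether to stop at time $n$} given $(\mathcal F_n,U_1,\dots,U_n)$; it cannot rewind the underlying walk or discard a sample path. ``Abort and reinitialize'' has no meaning inside the filtration: once the walk has flipped a lamp far to the left, that lamp is part of every subsequent state, and you have to wait for the walk to unflip it, which may never happen. So $\pi_*\mu_{\tau_i}$ cannot be forced to live on positive excursions.

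The paper's fix is precisely to relax the requirement: $\tau_n$ stops the first time the lamps in a \emph{finite} window $[-s_n,s_n]$ are all off and $|X_t|\geq r_n$. The pair $(\varphi_t|_{[-s_n,s_n]},X_t)$ is a recurrent Markov chain, so $\tau_n<\infty$ almost surely. The price is that the increment's lamp support can lie anywhere outside $[-s_n,s_n]$, so your ``lamps only affect $[X_{n-1},X_n]$'' shortcut is lost and stabilization is no longer automatic. This is exactly why the paper needs the heavy-tailed $p$ with eventually simple records, the gauge $\Phi$ controlling $T_{k+1}$ in terms of $R_k$, and the inductive choice of $(s_k,r_k)$ so large that any $\Phi(k)$ increments of scale $<k$ keep the lamp support inside $[-s_k/3,s_k/3]$: together these guarantee that at a record time a window is cleared and the walker is displaced far enough that the lamps in a growing central window never change again. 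If you try to repair your construction by weakening the positive-excursion requirement to a finite-window clearing, you will find yourself reproducing this machinery.
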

We explain why this implies that the geometric boundary of $F_2$ is not a model for the Poisson boundary of $(F_2, \mu_\tau)$.

\begin{proof}[Proof of Theorem~\ref{thm:free-group} given Proposition~\ref{prop:lamp-stabilization}]
	Let $\nu$ be the hitting measure on $\partial F_2$ induced by the lazy simple random walk. As $ \mu_\tau$ is a randomized stopping time transformation of the lazy simple random walk, the measure $\mu_\tau$ induces the same hitting measure on $\partial F_2$. If $(\partial F_2, \nu)$ is the Poisson boundary of $(F_2, \mu_\tau)$, then the Poisson integral formula implies that $\mu$ and $\mu_\tau$ admit the same collection of bounded harmonic functions. 
    
    Hence to show the claim it suffices to exhibit a bounded $ \mu_\tau $-harmonic function that is not $ \mu $-harmonic. Since the Poisson boundary of $  (\mathbb{Z} \wr \mathbb{Z}/2 \mathbb{Z}, \pi _{*}  \mu_\tau) $ is nontrivial, we may let $ f $ be a non-constant bounded $ \pi _{*} \mu_\tau $-harmonic function on $ \mathbb{Z} \wr \mathbb{Z}/2 \mathbb{Z} $. Then the pullback $ \pi ^{*} f $ is non-constant, bounded and $ \mu_\tau $-harmonic. 
    
    We claim $\pi^*f$ is not $\mu$-harmonic. To deduce this, we show that any bounded function on $ F _{2} $ which is $ \mu $-harmonic and constant on the fibers of $ \pi $ must be constant. Indeed, we can write such a function in the form $ \pi ^{*} g $ where $ g $ is a bounded function on $ \mathbb{Z} \wr \mathbb{Z}/2 \mathbb{Z} $. Then for any $ x \in F _{2} $, we have
	\[ \sum_{y \in \mathbb{Z} \wr \mathbb{Z}/2 \mathbb{Z}} g\left( \pi (x) y\right) \pi _{*} \mu(y)
		= \sum_{\substack{y \in \mathbb{Z} \wr \mathbb{Z}/2 \mathbb{Z}, \\ k \in \pi ^{-1} (y) }} \pi ^{*} g(xk) \mu(k) 
	= \pi ^{*} g(x) = g(\pi (x)) .\]

	Hence $ g $ is $ \pi _{*} \mu$-harmonic. Since $ \pi _{*} \mu $ is a finitely supported symmetric and irreducible measure on the lamplighter,  the Poisson boundary of $ (\mathbb{Z} \wr \mathbb{Z}/2 \mathbb{Z}, \pi _{*} \mu) $ is trivial. As a result, $g$ is constant. As $\pi^*g$ is constant on the fibers of $\pi$, it must be constant everywhere.
\end{proof}

\begin{remark}
    As pointed out to us by Vadim Kaimanovich, Proposition \ref{prop:lamp-stabilization} implies Theorem \ref{thm:free-group} by the fact that for any normal subgroup $N$ of a countable group $G$, the Poisson boundary $\PB(G/N, \pi_* \mu)$ is the space of ergodic components of the action of $N$ on $\PB(G, \mu)$ (see \cite{kaimanovichabstract1982} or \cite[Theorem 2.4.2]{forghani2015transformed}). Hence if the Poisson boundary of $(\mathbb{Z} \wr \mathbb{Z}/2 \mathbb{Z}, \pi_*\mu_{\tau})$ is larger than that of $(\mathbb{Z} \wr \mathbb{Z}/2 \mathbb{Z}, \pi_*\mu)$, then the Poisson boundary of $(F_2, \mu_{\tau}$) is strictly larger than the geometric boundary. For the convenience of the reader, we decided to retain the proof above. 
\end{remark}

We now proceed with the construction of $ p $ and $ ( \tau _{i}) _{i \geq 0}  $.

\subsection{A heavy-tailed probability measure on $\mathbb{N}$} 
Given a probability measure $ p $ on $ \mathbb{N} $, let $ (X _{i}) _{i \geq 1} $ be i.i.d samples drawn according to $ p $. Let $ T _{k} $ be the $ k $th \emph{record time} given by $ T _{0} = 1, T _{k} = \inf \{ i > T _{k-1}, X _{i} \geq X _{j} \forall j < i \} $. In addition, let $R_k = X_{T_k}$ be the $k$th record value.

\begin{definition}
    A probability measure $ p$ on $\mathbb{N}$ has \emph{eventually simple records} if almost surely, for $ k $ sufficiently large, we have $ R_{k+1}> R_k $. 
\end{definition}

We record some lemmas about records times of i.i.d. samples.

\begin{lemma} \label{lemma:simple-records}\cite[Lemma 2.3]{vervaat1973limit,  brands1994number,qi1997note,eisenberg2009number,erschler2023arboreal}
	There exists a probability measure $ p $ on $ \mathbb{N} $ which satisfies $p(i)>0$ for all $i \geq 0$ and has eventually simple records.
	  \end{lemma}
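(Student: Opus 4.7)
The plan is to exhibit an explicit heavy-tailed measure $p$ on $\mathbb{N}$ and verify eventual simplicity of records via a first-moment computation. For concreteness I take $p$ to be a normalized power-law, say $p(n) = c(n+1)^{-2}$, so that $p(n) > 0$ for every $n \geq 0$ and the tail $\bar F(n) := \sum_{j \geq n} p(j)$ satisfies $\bar F(n) = \Theta(1/n)$. The intuition is that a heavy tail forces record values to grow rapidly, so the conditional probability of tying at a given record level shrinks.

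I would then analyze the sequence of record values as a Markov chain. Conditional on $R_k = n$, the next record $R_{k+1}$ is obtained from the first subsequent sample that is $\geq n$, and has conditional law $p(\cdot)/\bar F(n)$ on $\{n, n+1, \ldots\}$. In particular, the probability of a tie $R_{k+1} = R_k$ equals $q(n) := p(n)/\bar F(n)$, which for my choice of $p$ is $\Theta(1/n)$. Writing $N_n$ for the number of indices $k$ with $R_k = n$, two observations suffice: (i) $\mathbb{P}(N_n \geq 1) = q(n)$, since among the almost surely infinitely many samples that are $\geq n$ the first one equals exactly $n$ with probability $q(n)$; and (ii) conditional on $N_n \geq 1$, the count $N_n$ is geometric on $\{1, 2, \ldots\}$ with parameter $1 - q(n)$, whence $\mathbb{E}[N_n - 1 \mid N_n \geq 1] = q(n)/(1 - q(n))$.

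Summing, the expected total number of ties is
\[
\mathbb{E}\bigl[\#\{k \geq 0 : R_{k+1} = R_k\}\bigr] \;=\; \sum_{n \geq 0} \mathbb{P}(N_n \geq 1)\cdot \mathbb{E}[N_n - 1 \mid N_n \geq 1] \;=\; \sum_{n \geq 0} \frac{q(n)^2}{1 - q(n)} \;=\; O\Bigl(\sum_{n \geq 1} \tfrac{1}{n^2}\Bigr) \;<\; \infty.
\]
Since a non-negative integer sum with finite mean is finite almost surely, only finitely many ties occur, which is exactly the claim that records are eventually simple.

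The main delicate point is justifying the two Markov-chain identities (i) and (ii), both of which reduce to conditioning on the (random) sub-sequence of samples that are $\geq n$ and using that these are i.i.d.\ with conditional distribution $p(\cdot)/\bar F(n)$. Since the lemma is classical and appears in several guises in the works cited after the statement, an alternative route is simply to invoke the result directly from \cite{vervaat1973limit} or \cite{erschler2023arboreal} rather than redoing the computation.
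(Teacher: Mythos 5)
Your argument is correct and follows essentially the same route the paper sketches in the remark after the lemma: the same explicit measure $p(n) = c(n+1)^{-2}$, the same key quantity $q(n) = p(n)/\sum_{\ell \geq n}p(\ell) = \Theta(1/n)$ (the paper's $p(n,n)$ from the Markov-chain viewpoint on record values), and the same summability condition $\sum_n q(n)^2 < \infty$. You phrase the conclusion as a first-moment bound on the total number of ties counted with multiplicity, whereas the paper invokes Borel--Cantelli on the events $\{N_n \geq 2\}$; these are interchangeable, and the geometric-distribution observation underlying your step (ii) is exactly what makes both work.
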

	  \begin{remark} 
		  In fact, there are many probability measures with eventually simple records. It was observed by Vervaat (see \cite{vervaat1973limit}) that the sequence of record values $ (R _{k}) _{k\geq 0} $ is a Markov chain with transition probabilities $ p(i,j) := p(j) / \sum_{\ell \geq i} p(\ell) $. By Borel-Cantelli, a sufficient condition for eventual record simplicity is that $ \sum _{i \geq 0} p(i,i) ^{2} < \infty$ and that $ p $ has infinite support. This is satisfied, for instance, by the measure $ p(n) = c(n+1) ^{-2}$ for an appropriate constant $ c>0$. 
		    \end{remark}
		    
	  For the rest of this section, fix a measure $p$ satisfying the conclusion of Lemma \ref{lemma:simple-records}. 

	  We also introduce a gauge function which controls the record times in terms of previous record values.

	  \begin{lemma}\label{lemma:gauge-ii}\cite[Lemma 2.17]{erschler2023arboreal}
	For any probability measure $ p $ on $ \mathbb{N} $, there exists a nondecreasing $ \Phi: \mathbb{N}\to \mathbb{N} $ such that almost surely for $ k $ sufficiently large we have $ T _{k+1} < \Phi(R_k) $.
	  \end{lemma}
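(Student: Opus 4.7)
The plan is to reduce the bound on record times $T_{k+1}$ to a Borel--Cantelli argument on first-passage times, which are simple geometric random variables. Define $S_r := \inf\{i \geq 1 : X_i \geq r\}$. Since the measure $p$ fixed for this section has unbounded support (as implied by eventually simple records), each $S_r$ is a geometric random variable with parameter $q_r := \Pr{X_1 \geq r} > 0$, giving the tail bound $\Pr{S_r > n} \leq (1-q_r)^n \leq e^{-n q_r}$.

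The key reduction is the identity $T_{k+1} = S_{R_k + 1}$ for all sufficiently large $k$. Under eventually simple records, $R_{k+1} \geq R_k + 1$ for large $k$, so $X_{T_{k+1}} \geq R_k + 1$ and hence $S_{R_k+1} \leq T_{k+1}$. For the reverse inequality, note that for any $i < T_{k+1}$ the value $X_i$ satisfies $X_i \leq R_k$: if $i \leq T_k$ this follows from $X_{T_k} = R_k$ being the running maximum up to time $T_k$, while if $T_k < i < T_{k+1}$ then $X_i$ is not a record, so it is strictly less than the running maximum, which still equals $R_k$ since no new record has occurred between $T_k$ and $T_{k+1}$. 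Therefore $S_{R_k+1} \geq T_{k+1}$, and the two inequalities give the identity.

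To construct $\Phi$, pick any nondecreasing $\Phi \colon \mathbb{N} \to \mathbb{N}$ satisfying $\Phi(r)\, q_{r+1} \geq 2\log(r+2)$ for every $r$; for instance, set $\Phi(r) := \max_{s \leq r} \lceil 2 \log(s+2)/q_{s+1} \rceil$. Then the geometric tail bound yields $\sum_{r \geq 0} \Pr{S_{r+1} \geq \Phi(r)} \leq \sum_{r \geq 0} e^{-\Phi(r) q_{r+1}} \leq \sum_{r \geq 0} (r+2)^{-2} < \infty$, so by Borel--Cantelli, almost surely $S_{r+1} < \Phi(r)$ for all sufficiently large $r$. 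Since $R_k \to \infty$ almost surely under eventually simple records, combining with the identity from the previous step gives $T_{k+1} = S_{R_k + 1} < \Phi(R_k)$ for all sufficiently large $k$.

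The main obstacle is setting up the identity $T_{k+1} = S_{R_k + 1}$, which crucially relies on the strict simple records hypothesis: this is what ensures both that the walk stays strictly below $R_k$ strictly between consecutive records and that the next record value exceeds $R_k$ by at least one. Once this identity is in place, the construction of $\Phi$ is a routine Borel--Cantelli computation using only the geometric tails of the first-passage times $S_r$, and the eventual divergence $R_k \to \infty$ transfers the resulting bound on $S_{r+1}$ back to a bound on $T_{k+1}$.
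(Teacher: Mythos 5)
Your proof is correct. The paper itself cites~\cite{erschler2023arboreal} for this lemma without reproducing an argument, so there is no in-paper proof to compare against. A few remarks. The statement as literally written (``for any probability measure $p$ on $\mathbb{N}$'') is actually false: if $p$ has bounded support then $R_k$ is eventually constant while $T_{k+1}\to\infty$, so no $\Phi$ can work; you correctly prove the result under the standing assumption of eventually simple records fixed earlier in the section, which forces unbounded support and is the regime in which the lemma is applied. In fact your argument both simplifies and extends to any $p$ with unbounded support: the only step you actually use downstream is the one-sided inequality $T_{k+1}\le S_{R_k+1}$, which is precisely your ``reverse inequality'' and holds for every $k$ with no appeal to simple records at all (the running maximum at every time $i<T_{k+1}$ equals $R_k$, so $X_i\le R_k$, giving $S_{R_k+1}\ge T_{k+1}$). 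The forward inequality $S_{R_k+1}\le T_{k+1}$ is the only place the simple-records hypothesis enters, and since it is never used, the hypothesis can be weakened to unbounded support, which is the sharp condition. Finally, there is a harmless off-by-one in the tail bound: $\Pr{S_{r+1}\ge\Phi(r)}=(1-q_{r+1})^{\Phi(r)-1}$ rather than $(1-q_{r+1})^{\Phi(r)}$, which changes the estimate by at most a factor of $e$ and leaves summability, and hence the Borel--Cantelli conclusion, intact.
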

We note that the condition here is on $T_{k+1}$, not $T_k$. 

\subsection{A sequence of stopping times} 
Let $ Z _{n} $ denote the simple random walk on $ F _{2} $ and $ (X_, \varphi _{n}) = \pi (Z _{n}) $ its projection to $ \mathbb{Z} \wr \mathbb{Z}/2 \mathbb{Z} $. Given sequences of natural numbers $ (s _{n}) _{n \geq 0}, (r _{n})_{n\geq 0} $, we define a sequence of stopping times $ (\tau _{n}) _{n \geq 0} $ by
\[ \tau _{n} = \inf \left\{ t \geq 1, \varphi_t|_{[-s _{n}, s _{n}]} \equiv 0, |X_t| \geq r _{n}  \right\} .\] 

In other words, the first time that all the the lamps in the interval $[-s_n,s_n]$ are off and the lamplighter is more than $r_n$ away from the origin. 

Since the simple random walk on $ \mathbb{Z} $ is recurrent, it follows that each $ \tau _{n} $ is almost surely finite. Indeed, recurrence of the simple random walk implies the recurrence of the pair $(\varphi_t|_{[-s_n,s_n]}, X_t)$, so that almost surely the condition in the definition of $\tau_n$ is met infinitely often.


We proceed to inductively make a choice of $ s _{1} < r _{1} < s _{2} < r _{2} < ... $. Set $ s _{1} = r _{1} = 0 $. Let $ r _{1},s _{1},..., s _{k-1}, r _{k-1} $ be given. Given a sequence $ (i_1, i_2, ... i_{\Phi(k)}) \subset \{1,..., k-1\} ^{\Phi(k)}$, consider the random sequence $ (g _{j} )_{1 \leq i \leq \Phi(k)} $ where the $ g _{j} $ are each drawn independently from $\mu_{\tau^{(j)}}$, the transformation of $\mu$ by the stopping time $ \tau^{(j)}=\tau _{i _{j}} $, which is well defined as each element of the sequence is less than $ k$ and we have already chosen $r_1,s_1,..., r_{k-1}, s_{k-1}$. Then for $ 1 \leq j \leq \Phi(k) $, let $ w_j = g_1...g_{j} $, and let $ \varphi_j $ be the lamp configuration of $ \pi (w_j) $. 

We pick $ s_k $ large enough that for every choice of sequence $ (i_1, ..., i_{\Phi(k)}) $, with probability at least $1-2^{-k}$ the support of $ \varphi_j $ is contained in $ [-s_k/3, s_k/3] $ for all $ 1\leq j \leq \Phi(k) $. Then we set $ r_k = 3s_k $. 

With our sequence of stopping times and probability measure $p$ chosen,  we are now ready to prove proposition \ref{prop:lamp-stabilization} and hence Theorem \ref{thm:free-group}.

\begin{proof}[Proof of Proposition~\ref{prop:lamp-stabilization}]
	First observe that $\mu_\tau$ is irreducible as $p(0)>0$ and $\mu$ is irreducible. We show the Poisson boundary of $(\mathbb{Z}\wr\mathbb{Z}/2\mathbb{Z}, \pi_*\mu_\tau)$ is nontrivial by showing that the sequence of lamp configurations $ (\varphi _{j}) _{j \geq 0} $ almost surely converges. 
	Let $ (X_i) _{i\geq 1}  $ be i.i.d samples from $ p $, and $ (T _{k}) _{k \geq 1} $ the associated record times.
	Then almost surely there exists $ k_0 $ such that for $ k\geq k_0 $, we have $R_{k-1} < R_k < R_{k+1}$, and $T_{k+1} < \Phi(R_k)$ (by Lemma \ref{lemma:gauge-ii}).
	Given some $ k \geq k_0 $, consider the random walk at time $ T _{k+1}-1,$  

	\[ w _{T _{k+1} - 1} = \underbrace{g _{1} ... g _{T _{k} -1}}_{a} g _{T _{k}} \underbrace{g _{T _{k} +1} ... g _{T _{k+1} -1}}_{b}   .\] Observe that each of $ a $ and $ b $ is a product of at most $ \Phi(R_k) $ increments, drawn independently from some sequence $ \mu_{\tau_{i_1}}, ..., \mu_{\tau_{i_\ell}} $ where $ i_j < R_k$ for all $ 1 \leq j \leq \ell $. 
	Then by our choice of $ (s_k)_{k\geq 0}, (r_k)_{k \geq 0} $ together with the Borel-Cantelli lemma, we know that almost surely for $ k$ sufficiently large, we have $ $ 
		  \[ \varphi _{T _{k} -1} | _{[- s _{R _{k}}/3, s _{R _{k}}/3]} = \varphi _{j}| _{[-s _{R _{k}}/3, s _{R _{k}}/3]} \] for all $ T_k \leq j \leq T _{k+1} -1 $. Since the records $ R _{k} $ are by definition increasing and they almost surely go to infinity, the limit $ \lim _{j \to \infty} \varphi _{j} $ exists almost surely.

          As $\mu_\tau$ is irreducible, the limiting lamp configuration is not almost surely constant, so the Poisson boundary of $ ( \mathbb{Z} \wr \mathbb{Z}/2 \mathbb{Z}, \pi _{*}  \mu) $ is nontrivial.
\end{proof}

\begin{remark}\label{remark:free-semigroup}
    We remark that this construction can also be made to work for the free semigroup. For the semigroup $F_2^+$ generated by $a$ and $b$, we get a surjection onto the lamplighter by sending $a$ to $(0,1)$ and $b$ to $(e_0, -1)$. The simple random walk on the free semigroup pushes forward to a irreducible random walk on the lamplighter whose projection to $\mathbb{Z}$ is recurrent, so a similar choice of randomized stopping time works to produce a measure on $F_2^+$ with Poisson boundary strictly larger than the geometric boundary.
\end{remark}

\begin{remark}
    We can make our construction work with a non-randomized stopping time as opposed to a randomized stopping time. To do this, we exploit the randomness coming from the number of increments of the $\mu$-random walk which take the value $e$. Indeed, observe that the supports of $\left(\mu_{\tau_i}\right)_{i\geq 0}$ are pairwise disjoint, so that we may let $\tau$ be the stopping time given by stopping the $\mu$-random walk the first time it lies in the support of some $\mu_{\tau_n}$ if the number of identity increments is at most $\lfloor 1/p(n) \rfloor$ modulo $\lfloor 1/p(n) \rfloor^2$. If $(s_k)_{k\geq 1}, (r_k)_{k\geq 1}$ are sufficiently large depending on $p$, then the number of identity increments modulo $\lfloor 1/p(n)\rfloor^2$ is asymptotically uniformly distributed and asymptotically independent of the position of the random walk, so that our same proof works to show that the induced random walk on the lamplighter has non-trivial boundary.
\end{remark}

\begin{remark}\label{remark:error}
    It is claimed in an unpublished manuscript of Forghani and Kaimanovich \cite{forghanikaimanovichstopping}, and quoted in \cite[Theorem 3.6.1]{forghani2015transformed}, that non-randomized stopping times do not modify the Poisson boundary. However there is a subtle error in the proof. The transformed random walk on the group $G$ is lifted to the free semigroup on the set $\supp \mu$, viewed as abstract symbols, where the Poisson boundary is left unchanged by a lift of the stopping time. The transformed measure on the free semigroup, however, is no longer generating, so one can not necessarily conclude that the Poisson boundary of the quotient $(G, \mu_\tau)$ is left unchanged.
\end{remark}

\section{Random walks and switching elements in ICC groups}\label{section:switching-elements}

Given a finite subset $ F \subset G $ and a (not necessarily finite) subset $ A \subset G $, we say that $ A $ is \emph{$ F $-switching} if for any $ f_1,f_2,f_3,f_4 \in F, a_1,a_2 \in A $, the equality $ f_1a_1f_2 = f _{3} a_2 f_4$ implies $ f_1=f_3 $, $ f_2=f_4 $, and $ a _{1} = a_2 $. We say that an element $ g \in G $ is \emph{$F$-switching} if the set $ \{ g \}  $ is $ F$-switching and that $ g $ is \emph{$ F $-superswitching} if the set $ \{ g, g ^{-1} \} $ is $ F $-switching. Switching sets were defined in \cite{erschler2023arboreal}, generalizing the notion of switching elements from \cite{frisch2019choquet, frisch2019strong},

The goal of this section is to prove Lemma \ref{lemma:stopping-at-switcher} where we produce, for any finite $ F \subset G $, a stopping time $ \tau $ such that the support of $ \mu _{\tau} $ is $ F $-switching. This lemma is key to the construction of our randomized stopping time in the proof of Theorem $ \ref{thm:always-bigger} $, and is applied in Section \ref{section:construction-of-mutau}.

Without loss of generality we may assume that $ \mu(e) > 0 $, since replacing $ \mu $ with $ \frac{1}{2} \mu + \frac{1}{2} \delta _{e} $ preserves the space of bounded harmonic functions. One may interpret this as first transforming $ \mu $ by a randomized stopping time which is $ 0 $ or $ 1 $ each with probability $ 1/2 $, then transforming it by the randomized stopping time $\tau$.

The authors of \cite{frisch2019choquet} and \cite{erschler2023arboreal} construct switching and superswitching elements using a density argument, that if a group is ICC then the set of non-$F$-superswitching elements is small in an appropriate sense, and so there are infinitely many $F$-superswitching elements. However, to construct a measure using a randomized stopping time, it is not enough to show the existence of switching elements. Instead we need to show that the $ \mu $-random walk witnesses an abundance of switching elements. To do this, we take an approach inspired by \cite{gorokhovsky2024quantitative}. Roughly speaking, we show that any irreducible random walk run for sufficiently long will be $F$-switching with high probability. We need the following lemma, similar to \cite[Claim 2.1]{gorokhovsky2024quantitative} and \cite[Lemma 3.2]{le2018subgroup}.

\begin{lemma} \label{lemma:Linf-decay}
	For any irreducible probability measure $ \mu $ on a countable group $ G $, any subgroup $ H < G $ of infinite index and finite subset $ F \subset G $, we have $ \mathbb{P}(w _{n} \in HF ) \to 0 $.
	  \end{lemma}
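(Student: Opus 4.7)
The plan is to argue by contradiction and use a clean maximum-set argument. Since $HF$ is a finite union of right cosets of $H$, it suffices to show that $\mathbb{P}(w_n \in Hf) \to 0$ for a single $f$; rebasing, I just need to show $\mathbb{P}(w_n \in H) \to 0$. I will freely assume $\mu(e)>0$ (replacing $\mu$ by $\tfrac12\mu+\tfrac12\delta_e$, which does not affect the limiting claim), so that the induced chain on the countable coset space $Y = H\backslash G$ is aperiodic.

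First I will extract a stationary limit. Let $q_n$ be the distribution of $Hw_n$ on $Y$, and consider the Cesàro averages $\bar q_N = \frac{1}{N}\sum_{n=1}^N q_n$. Since $Y$ is countable, a diagonal argument produces a subsequence $\bar q_{N_k}$ converging pointwise to a subprobability $\pi$ on $Y$. The telescoping identity $\bar q_N P - \bar q_N = (q_{N+1}-q_1)/N$ (where $P$ is the $\mu$-transition operator) shows that $\pi$ is $\mu$-stationary: $\pi = \pi P$. Assuming the contradictory hypothesis and using aperiodicity to pass from pointwise non-decay of $q_n(H)$ to Cesàro non-decay, I obtain $\pi(H)>0$.

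The heart of the proof is to rule out such $\pi$. A direct computation shows the counting measure on $Y$ is $\mu$-stationary: for any $Hy$, one has $\sum_{Hx\in Y} p(Hx,Hy) = \sum_{s\in G}\mu(s)=1$, because each $s\in G$ corresponds to a unique pair $(Hx,h)\in Y\times H$ with $x^{-1}hy=s$. Now let $M = \max_{y\in Y}\pi(y)$; since $\pi$ is a nonzero subprobability, $M>0$ and is attained, and the level set $S_M = \{y : \pi(y)=M\}$ is nonempty and has cardinality at most $1/M$, hence is finite. Stationarity combined with $\sum_x p(x,y)=1$ gives, for any $y\in S_M$,
\[ M = \pi(y) = \sum_x \pi(x)p(x,y) \leq M\sum_x p(x,y) = M, \]
with equality only if $\pi(x)=M$ whenever $p(x,y)>0$. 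Thus $S_M$ is backward-closed under the chain.

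Finally I close the contradiction using the group structure. For each $s\in\text{supp}(\mu)$, right multiplication by $s^{-1}$ is a bijection of $Y$, and the backward-closedness statement above is exactly $S_M \cdot s^{-1} \subset S_M$. Since both sides have the same (finite) cardinality, $S_M \cdot s^{-1} = S_M$. Hence $S_M$ is invariant under right multiplication by $s^{-1}$ for every $s\in\text{supp}(\mu)$; as $\text{supp}(\mu)$ generates $G$ as a group, $S_M$ is $G$-invariant. But $G$ acts transitively on $Y=H\backslash G$, so a nonempty $G$-invariant subset must equal $Y$, contradicting $|S_M|<\infty = |Y|$. The main obstacle is the upgrade from the pointwise hypothesis to its Cesàro form (and the mild reshuffling needed to handle periodicity if one prefers not to lazify $\mu$); everything else is essentially the clean max-level-set argument together with bijectivity of right translation.
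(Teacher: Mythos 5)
Your proof is correct in substance and takes a genuinely different route to the contradiction than the paper. The paper shows that the relevant strongly connected component of the coset chain is infinite (via the observation that every $\supp\mu$-semigroup orbit on $H\backslash G$ is infinite when $H$ has infinite index), so the counting measure gives an infinite invariant measure, and then invokes the ergodic theorem for aperiodic irreducible chains to force $q_n \to 0$. You instead extract a finite nonzero stationary (sub)probability $\pi$ and rule it out by the max-level-set argument together with transitivity of the $G$-action on $H\backslash G$; this cleanly replaces the paper's orbit/infinite-counting-measure step and needs no case analysis over communicating classes. Your verification that the coset chain is doubly stochastic and that $S_M\cdot s^{-1}=S_M$ for all $s\in\supp\mu$ (hence $S_M$ is $G$-invariant) is correct and elegant.

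One point you should be more explicit about: the step you flag as ``the main obstacle,'' passing from $\limsup_n q_n(H)>0$ to a nonzero Cesàro limit, is not merely a consequence of aperiodicity. In general $\limsup a_n>0$ does not imply nonvanishing Cesàro averages, and the Cesàro construction by itself only yields a (possibly zero) stationary subprobability. What actually justifies $\pi(H)>0$ is the Markov chain convergence theorem: if $\limsup_n q_n(H)>0$ then $\sum_n q_n(H)=\infty$, so $H$ is recurrent, and for an aperiodic recurrent state $q_n(H)\to 1/m_H$; since this limit is nonzero, $H$ is positive recurrent and $\bar q_N(H)\to 1/m_H>0$. That is precisely the same tool the paper invokes (``the ergodic theorem for aperiodic and irreducible Markov chains''), just applied in contrapositive form. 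So the two proofs are comparable in the depth of the Markov chain input; the genuine novelty of yours is the max-level-set replacement for the orbit argument, which is a nice simplification.

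One small further note: the ``rebasing'' to reduce to $f=e$ is unnecessary, since your max-level-set argument makes no reference to the particular state $H$ — it rules out every nonzero finite stationary measure on $H\backslash G$, hence shows $q_n(Hf)\to 0$ for every $f$ simultaneously.
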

\begin{proof}
	Consider the action of $ G $ on the space of right cosets $ G /H$. We first claim that every $ \supp \mu $-semigroup orbit is infinite. Indeed, if there was a finite set $ X $ such that $ Xg \subset X $ for all $ g \in \supp \mu $, then we would have $ Xg = X $ and so $ X = X g ^{-1} $. Therefore the group generated by $ \supp \mu $ would fix $ X $, which would imply $ H $ has finite index.

	Now consider the (directed) transition graph of the Markov chain driven by the $ \mu$-random walk on the space of right cosets. For the claim to fail, there must exist some $Hf$ contained in some strongly irreducible component $ \mathcal{S}$. Since the $ \supp \mu $-semigroup orbit is infinite, the counting measure on $ \mathcal{S} $ is an infinite stationary measure, so the ergodic theorem for aperiodic and irreducible Markov chains implies $ \mathbb{P}(w _{n} \in Hf) \to 0$. The Markov chain is aperiodic as we assumed $\mu(e)>0$.

	Applying the union bound over the finitely many $f\in F$ gives the claim.
	\end{proof}

Using this lemma, we show that random walks are $ F $-switching with high probability.

\begin{lemma}\label{lemma:RW-superswitching}
	For any irreducible probability measure $ \mu $ on a countable ICC group $ G $ and finite subset $ F \subset G$, we have 
	\[ \mathbb{P}(w_n \text{ is } \text{$F$-switching}) \to 1.\] If $\mu$ is non-degenerate, then
	\[ \mathbb{P}(w_n \text{ is } \text{$F$-superswitching}) \to 1.\] 
\end{lemma}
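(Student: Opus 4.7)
The plan is to identify, for each failure of $F$-switching or $F$-superswitching, a ``bad set'' of group elements contained in a finite union of right translates $HF'$ of infinite-index subgroups $H<G$, and then invoke Lemma~\ref{lemma:Linf-decay}.

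For the switching claim, $g$ fails to be $F$-switching if and only if there exist $a \in F^{-1}F \setminus \{e\}$ and $b \in FF^{-1} \setminus \{e\}$ with $ag = gb$. The solution set of this equation, when nonempty, is a coset $g_0 C_G(b) = C_G(a) g_0$ (one checks that $g = g_0 h$ satisfies $ag = gb$ precisely when $h \in C_G(b)$). Because $G$ is ICC and $a\neq e$, the centralizer $C_G(a)$ has infinite index, so this set equals $C_G(a) \cdot \{g_0\}$. A union bound over the finitely many pairs $(a,b)$ combined with Lemma~\ref{lemma:Linf-decay} then yields $\mathbb{P}(w_n \text{ is not } F\text{-switching}) \to 0$.

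For the superswitching claim, I would decompose the failure of $\{g,g^{-1}\}$ being $F$-switching into three sub-events: (i) $g$ not $F$-switching; (ii) $g^{-1}$ not $F$-switching; (iii) $g^2 \neq e$ and there exist $f_1,\dots,f_4 \in F$ with $f_1 g f_2 = f_3 g^{-1} f_4$. Sub-event (i) is handled above; for (ii), taking inverses in $g^{-1} \in C_G(a) g_0$ gives $g \in g_0^{-1} C_G(a) = C_G(g_0^{-1} a g_0) g_0^{-1}$, again a right translate of an infinite-index subgroup. For (iii), rearranging yields $gcg = d$ with $c = f_2 f_4^{-1} \in FF^{-1}$ and $d = f_1^{-1}f_3 \in F^{-1}F$; multiplying on the right by $c$ gives $(gc)^2 = dc$. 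If $dc \neq e$, then $gc$ commutes with its square $dc$, so $gc \in C_G(dc)$ and $g \in C_G(dc) c^{-1}$, an infinite-index coset handled by Lemma~\ref{lemma:Linf-decay}. If $dc = e$ and $c = e$, the equation reduces to $g^2 = e$, contradicting $g^2 \neq e$, so the bad set is empty.

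The delicate remaining sub-case is $dc = e$ with $c \neq e$: the bad set is $\{g : (gc)^2 = e,\; g^2 \neq e\} \subseteq I_{\mathrm{inv}} c^{-1}$, where $I_{\mathrm{inv}} = \{h \in G : h^2 = e\}$. This does not obviously fit the $HF'$ framework, since in an ICC group $I_{\mathrm{inv}}$ may be a union of infinitely many conjugacy classes of involutions, and I expect the non-degeneracy hypothesis enters precisely here. My proposed approach is a conditioning argument: write $w_n = w_k \cdot v$ with $v$ an independent length-$(n-k)$ walk, reduce the event $(w_n c)^2 = e$ to $v$ lying in a translate of $I_{\mathrm{inv}}$ depending on $w_k$, and use that $\mu^{*(n-k)}$ has full support (since $\mu$ is non-degenerate and $\mu(e) > 0$) to force $v$ to escape this translate with uniformly positive probability, iterating over disjoint time windows. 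The main obstacle is establishing the uniform-in-$w_k$ escape bound; this is where a quantitative non-degeneracy input, in the spirit of \cite{gorokhovsky2024quantitative}, appears to be required.
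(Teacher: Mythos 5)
Your proof of the switching part is correct and matches the paper's argument: the bad set decomposes into a finite union of cosets of centralizers $C_G(a)$ with $a\neq e$, which have infinite index by ICC, and Lemma~\ref{lemma:Linf-decay} kills each piece.

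For superswitching, your decomposition and your handling of the generic sub-case $dc\neq e$ are correct, and in fact slightly cleaner than the paper's: observing that $gc$ commutes with its own square $(gc)^2=dc$ gives the direct coset containment $g\in C_G(dc)\,c^{-1}$, and this does not even use non-degeneracy. (The paper instead passes through the containment $g^{-1}S_{x\to y}\subseteq C_G\left((xg)^2\right)$, uses non-degeneracy to transfer the positive limsup from $S_{x\to y}$ to $g^{-1}S_{x\to y}$, concludes $(xg)^2=e$ by ICC, and is thereby pushed into the involution case; the effect is the same.) The genuine gap in your proposal is precisely the sub-case $dc=e$, $c\neq e$, where the bad set sits in a translate $Ic^{-1}$ of the set $I$ of involutions. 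You correctly identify this as the crux and as the point where non-degeneracy must enter, and you correctly observe that $I$ need not be of the form $HF'$. But your proposed conditioning/escape argument is only a sketch, and the ``uniform-in-$w_k$ escape bound'' you would need is exactly the nontrivial content: there is no elementary reason the walk should avoid a union of infinitely many conjugacy classes of involutions uniformly over its position. The paper closes this case by invoking \cite[Theorem~5.1]{amir2023probabilistic}, which gives $\mathbb{P}(w_n\in I)\to 0$ for any non-degenerate random walk on a group that is not virtually abelian; since an ICC group is never virtually abelian this applies, and one more non-degeneracy transfer (from $\mathbb{P}(w_n\in x^{-1}I)$ to $\mathbb{P}(w_n\in I)$) produces the contradiction. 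Without that external result, the sub-case you flagged remains unproved in your write-up.
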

\begin{proof}
	Without loss of generality we may assume $ F $ is symmetric, as any element that is $ (F\cup F ^{-1})$-switching must be $ F$-switching. For an element $ a $ to not be $F$-switching, we must have $ f_1af_2=f_3af_4 $ for some $ (f_1,f_2) \neq (f_3,f_4) \in F \times F $, hence $ af_2f_4^{-1}a^{-1} = f_1^{-1}f_3 $. Given any non-trivial element $ f \in F^2 $, the collection of $ a \in G $ such that $ a f a ^{-1} \in F^2$ is contained in a union of at most $ |F|^2 $ cosets of the centralizer $ C _{G} (f) $. As $ G $ has the ICC property, the subgroup $ C _{G} (f) $ has infinite index, so Lemma \ref{lemma:Linf-decay} implies $ \mathbb{P}(w_nfw_n^{-1} \in F^2) \to 0 $. Applying the union bound over $ f \in F^2 \setminus \{ e \} $ we see that $ \mathbb{P}(w_n \text{ is } F\text{-switching}) $ tends to 1. 

	Now suppose that $\mu$ is non-degenerate. 
	For $ x,y \in F $, let $ S _{x \to y} = \{ g \in G, gxg = y \}  $, so by the union bound we are done if we show that $ \mathbb{P}(w _{n} \in S _{x \to y}) \to 0 $ for all $ (x,y) \neq (e,e) $.

	Suppose for the sake of contradiction that $ \limsup \mathbb{P}(w _{n} \in S _{x,y}) > 0   $ for some $ (x,y) \neq (e,e) $. Let $ g \in S _{x,y} $, then any element $ h $ contained in $ g ^{-1} S _{x,y}$ must be in the centralizer of $ (xg) ^{2}  $ (see \cite[Proposition 2.5]{frisch2019choquet}). Since $ \limsup \mathbb{P}(w_n \in S _{x,y}) > 0 $, then as $ g $ is in the semigroup generated by $\supp \mu$ we deduce that $ \limsup \mathbb{P}(w_n \in g ^{-1} S _{x,y}) > 0 $ as well. Hence $ \limsup \mathbb{P}(w_n \in C _{G}(\left(xg\right)^2)) > 0$ so we see from Lemma \ref{lemma:Linf-decay} that $ C _{G} \left((xg) ^{2}\right)  $ must have finite index. Since $ G $ is ICC, this implies $ (xg) ^{2} = e $. Letting $ I $ be the set of involutions, we see that $xS_{x,y} \subset I$ so in particular $\limsup \mathbb{P}(w_n \in I) > 0$. However, since $ G $ is ICC, it is in particular not virtually abelian, so by Theorem 5.1 of \cite{amir2023probabilistic}, we have $ \mathbb{P}(w_n \in I) \to 0 $, a contradiction.
\end{proof}

It would be tempting to construct $ \mu_\tau $ by letting $ \tau $ be the first time that the $ \mu $-random walk is $ F $-switching. This would produce a measure $ \mu _{\tau}  $ such that every element of $ \supp \mu _{\tau} $ is $ F$-switching. However, we need to produce a measure $ \mu _{\tau} $ such that the \emph{set} $ \supp \mu _{\tau} $ is $ F $-switching, which is a strictly stronger property. We construct such a measure in the following lemma.

\begin{lemma}\label{lemma:stopping-at-switcher}
	Let $G$ be a countable ICC group. For any finite set $ F \subset G$ and irreducible probability measure $ \mu $ on $ G $, there exists an almost surely finite (non-randomized) stopping time $ \tau $ with the property that $ \supp \mu _{\tau}$ is $F$-switching. Moreover if $\mu$ is symmetric, then $ \mu _{\tau} $ can be made symmetric.
	  \end{lemma}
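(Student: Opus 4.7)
The plan is to construct a countable $F$-switching set $A \subset G$ and produce $\tau$ as an adaptive multi-attempt hitting scheme for $A$. The key combinatorial observation is that on the set $S$ of individually $F$-switching elements, the conflict graph defined by $a_1 \sim a_2$ iff $Fa_1F \cap Fa_2F \neq \emptyset$ has maximum degree at most $|F|^4$, since the conflicts of $a$ lie in $(F^{-1}F)a(FF^{-1})$. Consequently $S$ decomposes into at most $|F|^4 + 1$ $F$-switching color classes, and any probability measure on $S$ places mass at least $1/(|F|^4+1)$ on one of them.

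I would enumerate the countable set $\mathbb{N} \times G$ and process pairs $(k, g)$ in some fixed order, maintaining a finite, already-committed $F$-switching set $A_{\mathrm{prev}}$. For each $(k, g)$: because Lemmas \ref{lemma:Linf-decay} and \ref{lemma:RW-superswitching} persist after translating the starting point from $e$ to $g$ (one replaces $F$ by its finite conjugate $g^{-1}Fg$, and conjugates of infinite-index subgroups stay infinite-index), I pick $T_{k,g}$ large enough that $\mathbb{P}(g w_{T_{k,g}} \in S \setminus \mathrm{BadSet}(A_{\mathrm{prev}})) \geq 1 - \eps$, where $\mathrm{BadSet}(A_{\mathrm{prev}}) := (F^{-1}F)A_{\mathrm{prev}}(FF^{-1}) \setminus A_{\mathrm{prev}}$ is a finite set. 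Applying the coloring observation to the law of $gw_{T_{k,g}}$ restricted to $S \setminus \mathrm{BadSet}(A_{\mathrm{prev}})$, I obtain a finite $F$-switching subset $A_{k,g}$ with $\mathbb{P}(g w_{T_{k,g}} \in A_{k,g}) \geq c$ for a universal constant $c = c(|F|) > 0$; since $A_{k,g}$ is disjoint from $\mathrm{BadSet}(A_{\mathrm{prev}})$, the union $A_{\mathrm{prev}} \cup A_{k,g}$ remains $F$-switching. I then absorb $A_{k,g}$ into $A_{\mathrm{prev}}$ and continue.

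Setting $A := \bigcup_{k,g} A_{k,g}$, which is $F$-switching by the non-clash invariant, I define $\tau$ recursively: $\sigma_0 := 0$, and at attempt $k$ with current state $g := w_{\sigma_{k-1}}$, set $\sigma_k := \sigma_{k-1} + T_{k,g}$ and stop if $w_{\sigma_k} \in A_{k,g}$. The strong Markov property together with the uniform success probability $\geq c$ gives $\mathbb{P}(\tau > \sigma_k) \leq (1-c)^k \to 0$, so $\tau < \infty$ almost surely, and $\supp \mu_\tau \subset A$ by construction. For symmetric $\mu$, one processes $(k, g)$ and $(k, g^{-1})$ together and commits $A_{k,g^{-1}} := A_{k,g}^{-1}$; combined with the path-reversal symmetry of the $\mu$-random walk, this forces $\mu_\tau$ to be symmetric.

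The main obstacle is producing a single constant $c > 0$ that works uniformly across the infinite enumeration, since the convergence rates in Lemmas \ref{lemma:Linf-decay} and \ref{lemma:RW-superswitching} depend on $g$ through the finite conjugate $g^{-1}Fg$. This is handled by allowing the waiting time $T_{k,g}$ to grow with $g$ and with the size of $A_{\mathrm{prev}}$, which decouples the threshold time from the success probability; once the scales $T_{k,g}$ are fixed appropriately, the coloring step produces a uniform lower bound such as $c = 1/(2(|F|^4+1))$.
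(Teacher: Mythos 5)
Your construction is genuinely different from the paper's. The paper thins the set of $F$-(super)switching elements probabilistically: it attaches i.i.d.\ $\mathrm{Uniform}[0,1]$ labels $(U_x)_{x\in G}$ to the group, lets $T$ consist of those superswitching $a$ whose label is maximal among $F^2a^{\pm}F^2$, observes that $T\cup T^{-1}$ is a.s.\ $F$-switching, shows that the walk a.s.\ visits $T\cup T^{-1}$ infinitely often (each local maximum is hit with probability $\geq |F|^{-4}/2$ independently of earlier attempts), and finally fixes a good realization of $(U_x)$ so that $\tau$ is the first hitting time of a deterministic $F$-switching set. You instead decompose the individually $F$-switching elements into at most $|F|^4+1$ $F$-switching ``color classes'' via a greedy coloring of the bounded-degree conflict graph and run a geometric-trials scheme with pre-committed finite targets $A_{k,g}$, using the strong Markov property and a uniform success probability $c>0$. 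Both produce a non-randomized stopping time whose stopped support is $F$-switching. Your approach avoids external randomness entirely (the paper's Fubini step to pass from the random $T$ to a fixed realization), at the cost of a somewhat more elaborate bookkeeping of the enumeration of $\mathbb{N}\times G$ and the $\mathrm{BadSet}$ exclusions; the paper's is shorter but less constructive. Both rely on the same underlying facts (Lemmas~\ref{lemma:Linf-decay} and~\ref{lemma:RW-superswitching}, suitably translated to starting points $g\neq e$).

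The symmetric case, however, is a genuine gap in your write-up. Setting $A_{k,g^{-1}}=A_{k,g}^{-1}$ and $T_{k,g^{-1}}=T_{k,g}$ does not make the scheme invariant under reversing the path: the attempt times are defined forward in time with state-dependent lengths, so a reversed path's attempt windows do not line up with the original's. Concretely, already at the second attempt the quantity $\mu_\tau|_{\text{attempt }2}(x)$ involves $\mathbb{P}(w_T = g^{-1}x)$, while after the substitution $g\mapsto g^{-1}$ and the symmetry of $\mu$ the corresponding term in $\mu_\tau|_{\text{attempt }2}(x^{-1})$ becomes $\mathbb{P}(w_T = xg^{-1})$; these are laws at conjugate points, and the law of $w_T$ is not conjugation-invariant in a nonabelian group, so they need not be equal. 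You would also need to switch to superswitching elements and to the conflict relation $a\sim a'$ iff $F^2a^{\pm}F^2\cap F^2a'^{\pm}F^2\neq\emptyset$ for $A_{k,g}\cup A_{k,g}^{-1}$ to remain $F$-switching, but that is a minor repair; the path-reversal step is the real obstruction and needs a different idea.
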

\begin{proof}
	Without loss of generality we may assume $ F $ is symmetric, as any set which is $ (F\cup F ^{-1})$-switching is $ F $-switching. Suppose $ \mu $ is irreducible (resp. irreducible and symmetric). We first `thin' out the set of all $F$-switching (resp. superswitching) elements as follows. Let $( U _{x}) _{x \in G} $ be a collection of i.i.d. Uniform$\left([0,1]\right)$ random variables, and let $ T $ be the subset of $ G $ consisting of all $ a $ that are $ F $-switching (resp. $F$-superswitching) and satisfy $ U_a \geq U _{b} $ for all $ b \in F^2aF^2$ (resp. $F^2a^{\pm}F^2$). Almost surely the random variables are distinct, so the set $T$ (resp. $ T \cup T ^{-1} $) is $ F $-switching. We let $ \tau $ be the first hitting time of $T$ (resp. $ T \cup T ^{-1} $), so it suffices to show that $ \tau $ is almost surely finite. 

	By Lemma \ref{lemma:RW-superswitching}, almost surely for every sample path of the $ \mu $-random walk, $ w _{n} $ is $F$-switching infinitely often. If $ \mu $ is irreducible and symmetric, it is non-degenerate, so $ w_n $ is $F$-superswitching infinitely often. Observe that the collection of subsets $ \{ F^2aF^2, a \text{ is } F\text{-switching} \}$ (resp. $ \{ F^2 a ^{\pm} F^2, a \text{ is } F\text{-superswitching} \} $) induces a partition of the set of $ F $-switching (resp. $F$-superswitching) elements into sets of size at most $ 2|F| ^{4} $. Then the sample path will almost surely intersect infinitely many distinct such subsets $ A _{1}, A _{2},... $ in elements $ a _{1}, a _{2}, ...$ . Since the random variables $ (U _{x} ) _{x \in G} $ are independent of the random walk, then each $ a _{i} $ has probability at least $|F| ^{-4}/2 $ of being in the random set $ T $, independent of all other $ a _{j} $. Letting $ \tau $ be the first hitting time of $T$ (resp. $ T \cup T ^{-1} $), we see that $ \tau $ is almost surely finite. 

	We observe that if $ \mu $ is symmetric then, as $ T \cup T ^{-1} $ is symmetric, the stopped measure $ \mu _{\tau} $ is clearly symmetric as well. By construction the set $ \supp \mu _{\tau}$ is $ F $-switching.
\end{proof}

\section{Arboreal structures on groups}\label{section:arboreal}

In this Section we construct our randomized stopping time used in the proof of Theorem \ref{thm:always-bigger}. Using the framework developed in \cite{erschler2023arboreal}, we produce a $ \mu _{\tau} $-boundary which, in section \ref{section:final-proofs}, enables us to construct a $ \mu _{\tau} $-harmonic function that is not $ \mu $-harmonic. A consequence of the framework of \cite{erschler2023arboreal} is a short proof that this $ \mu _{\tau} $-boundary is the Poisson boundary of $ (G, \mu _{\tau}) $ (see Corollary \ref{cor:PB-identification}).

We first recall some notions from \cite{erschler2023arboreal}.

\begin{definition} 
	A \emph{scale} on countable group is a triple $ \Lambda = ( \lambda, \Sib, \Ab) $ where $ \lambda: \mathbb{N} \to \mathbb{N} $ is a non-decreasing function, and $ \Sib = \left(\Sigma_1, \Sigma_2, ...\right) $ and $ \Ab = (A _{0}, A _{1},... ) $ are two sequences of subsets, where $ \left(\Sigma _{n}\right) _{n \geq 1}$ are pairwise disjoint and $ A _{0}$ contains the identity.
	  \end{definition}

  Given a scale, we define \[ \Delta _{n} = A _{0} \cup \bigcup _{i < n} \\\left(\Sigma_i ^{\pm}\cup A _{i} ^{\pm }\right).\] 

  Observe that in the definition of $ \Delta _{n},$ the union is taken over $ i $ strictly less than $ n $, and that $ \Delta_n$ contains the identity. 
\begin{definition} 
	A \emph{spike decomposition} of a group element $ g \in G $ with respect to a scale $ (\lambda, \Sib, \Ab) $ is a triple $( \overleftarrow{g}, \widehat{g}, \overrightarrow{g}) $ with $ g= \overleftarrow{g}\widehat{g}\overrightarrow{g} $, where $ \widehat{g} \in \Sigma _{n}  $ for some $ n$, and the prefix and postfix $  \overleftarrow{g},  \overrightarrow{g}  $ are products of at most $ \lambda (n) $ elements from $ \Delta _{n} $.
	  \end{definition}

\begin{definition} 
	The \emph{despiking graph} of a scale $ \Lambda $ is the graph with vertex set $ G $ and edges $ (\overleftarrow{g}, g) $ where $ \overleftarrow{g} $ is the prefix of a spike decomposition of an element $ g \in G $.  
	  \end{definition}

The following definition is slightly stronger than that in \cite{erschler2023arboreal}, but suffices for our purposes.
\begin{definition}
	We say a scale $ \Lambda =(\lambda, \Sib, \Ab)$ is a \emph{ladder} if \begin{enumerate} 
		\item Any $ g\in G  $ admits at most one spike decomposition.
		\item For all $n\geq 1$, the sets $ \Sigma _{n}$ and $ \Delta_n^{3 \lambda(n)}$ are disjoint.
		  \end{enumerate}
	  \end{definition}

\begin{proposition} 
	  For any ladder $ \Lambda $ on a group $ G$, 
	  \begin{enumerate} 
		  \item The despiking graph $ \mathcal{F}  $ is a forest.
		  \item Any tree of the forest $ \mathcal{F}  $ contains a unique unspiked vertex
		    \end{enumerate}
	  \end{proposition}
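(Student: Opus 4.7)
The plan is to introduce a level function on spiked vertices, show that the prefix map strictly decreases it, and then deduce both assertions from this monotonicity together with the uniqueness built into the ladder definition.

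For a spiked $g$, let $\ell(g) = n$ where $\widehat g \in \Sigma_n$; this is well defined by ladder property (1). The central lemma to prove is: if both $g$ and $\overleftarrow g$ are spiked, then $\ell(\overleftarrow g) < \ell(g)$. Write $g = \overleftarrow g\,\widehat g\,\overrightarrow g$ with $\widehat g \in \Sigma_n$ and $\overleftarrow g \in \Delta_n^{\lambda(n)}$, and suppose toward a contradiction that $\overleftarrow g = u\,\sigma\,v$ is a spike decomposition with $\sigma \in \Sigma_m$, $u,v \in \Delta_m^{\lambda(m)}$, and $m \geq n$. Then $\Delta_n \subseteq \Delta_m$, and $\Delta_m$ is symmetric (for $m \geq 1$, both $\Sigma_i^{\pm}$ and $A_i^{\pm}$ appear in its defining union). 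Isolating $\sigma$ and using monotonicity of $\lambda$,
\[
\sigma = u^{-1}\overleftarrow g\, v^{-1} \in \Delta_m^{\lambda(m) + \lambda(n) + \lambda(m)} \subseteq \Delta_m^{3\lambda(m)},
\]
which contradicts ladder property (2).

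Given the lemma, orient each despiking edge from $g$ to its prefix $\overleftarrow g$; by ladder property (1), each vertex has out-degree at most $1$, so we are in a functional graph. The lemma forces every forward chain to terminate at an unspiked sink, so there are no directed cycles. To rule out undirected cycles and thus establish part (1), suppose a cycle of length $k$ exists: its $k$ edges are each oriented out of one endpoint and each cycle vertex has global out-degree at most $1$, so every cycle vertex's unique outgoing edge lies on the cycle; iterating the resulting self-map on the cycle produces a directed cycle by pigeonhole, a contradiction. Hence the despiking graph is a forest.

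For part (2), existence of an unspiked vertex in each tree follows by tracing the out-chain from any vertex to its terminal sink. For uniqueness, let $u,v$ be unspiked vertices in the same tree and let $u = w_0, w_1, \ldots, w_k = v$ be the tree path. The edge $w_0 w_1$ must be oriented $w_1 \to w_0$ since $u$ is a sink; this uses up $w_1$'s unique outgoing edge, forcing $w_1 w_2$ to be oriented $w_2 \to w_1$, and so on inductively, so that $w_k = v$ has an outgoing edge, contradicting that $v$ is unspiked. The main obstacle is the bookkeeping in the central lemma, particularly combining the two nested decompositions and invoking both monotonicity of $\lambda$ and symmetry of $\Delta_m$ to fit inside $\Delta_m^{3\lambda(m)}$; the remainder is a routine functional-graph argument.
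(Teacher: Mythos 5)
The paper states this proposition without proof, evidently as a direct adaptation of the corresponding statement in \cite{erschler2023arboreal}, so your proof fills in an omitted argument rather than paralleling a written one. That said, your proof is correct and is essentially the natural argument.

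A few remarks on the details. Your central lemma — that the prefix map strictly decreases the level $\ell$ — is the right engine, and your derivation is correct: if $g=\overleftarrow g\,\widehat g\,\overrightarrow g$ with $\widehat g\in\Sigma_n$, $\overleftarrow g\in\Delta_n^{\lambda(n)}$, and $\overleftarrow g = u\sigma v$ with $\sigma\in\Sigma_m$, $u,v\in\Delta_m^{\lambda(m)}$, $m\ge n$, then since $\Delta_n\subseteq\Delta_m$, $\lambda(n)\le\lambda(m)$, $e\in\Delta_m$ and $\Delta_m$ is symmetric for $m\ge 1$, one gets $\sigma=u^{-1}\overleftarrow g\,v^{-1}\in\Delta_m^{3\lambda(m)}$, contradicting ladder property (2). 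This uses exactly the two ladder axioms as intended: property (1) makes $\ell$ and the prefix map well-defined (giving out-degree $\le 1$), and property (2) is precisely what the $3\lambda(m)$ bound is calibrated for. Your argument ruling out self-loops is implicit but worth noticing: $\overleftarrow g = g$ would force $\widehat g\overrightarrow g = e$, hence $\widehat g\in\Delta_n^{\lambda(n)}\subseteq\Delta_n^{3\lambda(n)}$ when $\lambda(n)\ge1$ (and $\widehat g=e\in\Delta_n^0$ when $\lambda(n)=0$), again contradicting property (2). The functional-graph argument — out-degree $\le1$ plus no directed cycles (from monotone $\ell$) implies no undirected cycles, and each component has a unique sink — is correct and standard; your orientation-propagation argument along a tree path between two putative sinks is a clean way to phrase the uniqueness.
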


	  We root each tree in $ \mathcal{F} $ at its unique unspiked vertex, and refer to $ \mathcal{F} $ as the \emph{ladder forest} associated to $ \Lambda $. 

	  The following lemma is the raison d'\^{e}tre of arboreal structures.
\begin{lemma}\cite[Lemma 4.5]{erschler2023arboreal}\label{lem:switching-ladder}
	If $ \Lambda = (\lambda, \Sigma, A) $ is a scale on a countable group $ G $ such that for all $ n \geq 1 $, $ \Sigma_n $ is $ \Delta _{n} ^{5 \lambda(n)}$-switching, then $ \Lambda $ is a ladder.
	  \end{lemma}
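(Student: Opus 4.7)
My plan is to establish both defining properties of a ladder by direct application of the switching hypothesis, handling condition (2) (disjointness of $\Sigma_n$ from $\Delta_n^{3\lambda(n)}$) first so that it can be deployed when proving condition (1) (uniqueness of spike decompositions).

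For condition (2), I would argue by contradiction. Suppose $\sigma \in \Sigma_n \cap \Delta_n^{3\lambda(n)}$ and write $\sigma = \delta_1 \cdots \delta_k$ with $k \leq 3\lambda(n)$ and $\delta_i \in \Delta_n$. Since $e \in A_0 \subseteq \Delta_n$, all four outer elements in the tautological equality
\[
\sigma \cdot \sigma \cdot e \;=\; e \cdot \sigma \cdot (\delta_1 \cdots \delta_k)
\]
lie in $\Delta_n^{5\lambda(n)}$, and the middle on each side is $\sigma \in \Sigma_n$. The $\Delta_n^{5\lambda(n)}$-switching property of $\Sigma_n$ then forces the left prefixes to agree, giving $\sigma = e$. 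A short separate observation---apply switching to $f \cdot e \cdot e = e \cdot e \cdot f$ for any $f \neq e$ in $\Delta_n^{5\lambda(n)}$---rules out $e \in \Sigma_n$ from the outset, closing the contradiction.

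For condition (1), suppose $g$ admits two spike decompositions $(\overleftarrow{g}_i, \widehat{g}_i, \overrightarrow{g}_i)$ with $\widehat{g}_i \in \Sigma_{n_i}$. In the equal-level case $n_1 = n_2 = n$, the prefixes and postfixes lie in $\Delta_n^{\lambda(n)} \subset \Delta_n^{5\lambda(n)}$, so the switching property applied to $\overleftarrow{g}_1 \widehat{g}_1 \overrightarrow{g}_1 = \overleftarrow{g}_2 \widehat{g}_2 \overrightarrow{g}_2$ yields equality of all three components at once. In the cross-level case, WLOG $n_1 < n_2$, the inclusions $\Sigma_{n_1}^{\pm} \subseteq \Delta_{n_2}$ and $\Delta_{n_1} \subseteq \Delta_{n_2}$ built into the definition of $\Delta_{n_2}$ let me realize $g$, via the first decomposition, as a product of at most $2\lambda(n_1)+1$ elements of $\Delta_{n_2}$. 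Rearranging against the second decomposition gives
\[
\widehat{g}_2 \;=\; \overleftarrow{g}_2^{-1}\, \overleftarrow{g}_1\, \widehat{g}_1\, \overrightarrow{g}_1\, \overrightarrow{g}_2^{-1},
\]
a product of boundedly many elements of $\Delta_{n_2}$, using that the strata $\Sigma_i^{\pm}, A_i^{\pm}$ comprising $\Delta_{n_2}$ are symmetric. Once this bound falls within $3\lambda(n_2)$, the just-established condition (2) delivers a contradiction.

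The main obstacle I anticipate is precisely this cross-level step: compressing the rearranged expression for $\widehat{g}_2$ so that its total length fits inside $\Delta_{n_2}^{3\lambda(n_2)}$. This is where the interaction between the switching constant $5\lambda(n)$, the disjointness constant $3\lambda(n)$, and the growth of $\lambda$ between consecutive levels enters the argument, and is the only nontrivial input beyond the switching hypothesis; all other portions of the proof are direct syntactic manipulations of the switching relation.
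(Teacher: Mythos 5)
Your proof of condition (2) and the equal-level case of condition (1) are both correct and are essentially the standard ``conjugation by itself'' arguments. But the cross-level case of condition (1) contains a genuine gap, and it is not quite the obstacle you think it is.

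You write $\widehat{g}_2 = \overleftarrow{g}_2^{-1}\overleftarrow{g}_1\widehat{g}_1\overrightarrow{g}_1\overrightarrow{g}_2^{-1}$ as a product of elements of $\Delta_{n_2}$, and the honest length count is $2\lambda(n_2) + 2\lambda(n_1) + 1 \leq 4\lambda(n_2) + 1$, which exceeds $3\lambda(n_2)$ whenever $\lambda(n_2) \geq 1$. You then appeal to ``the growth of $\lambda$ between consecutive levels'' to compress this. No such growth is available: $\lambda$ is merely nondecreasing in the definition of a scale and may well be constant, so the bound cannot be brought under $3\lambda(n_2)$ in general, and the plan of invoking condition (2) has no way to close.

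The fix is to observe that your own argument for (2) proves strictly more than (2). The identity $\sigma\cdot\sigma\cdot e = e\cdot\sigma\cdot(\delta_1\cdots\delta_k)$, combined with $\Delta_n^{5\lambda(n)}$-switching, works verbatim for any $k \leq 5\lambda(n)$, not just $k \leq 3\lambda(n)$; it therefore establishes the stronger disjointness $\Sigma_n \cap \Delta_n^{5\lambda(n)} = \emptyset$. With that in hand the cross-level case is immediate: $\widehat{g}_2 \in \Delta_{n_2}^{4\lambda(n_2)+1} \subseteq \Delta_{n_2}^{5\lambda(n_2)}$ as soon as $\lambda(n_2) \geq 1$, contradicting the strengthened disjointness. (When $\lambda(n_2) = 0$, both prefixes and postfixes are forced to be trivial, so $\widehat{g}_1 = g = \widehat{g}_2$, contradicting pairwise disjointness of the $\Sigma_i$ since $n_1 \neq n_2$.) Equivalently, one can bypass the disjointness intermediary altogether: write $\widehat{g}_2 = \delta_1\cdots\delta_m$ with $m \leq 4\lambda(n_2)+1 \leq 5\lambda(n_2)$ and apply switching to $(\delta_1\cdots\delta_m)\cdot\widehat{g}_2\cdot e = e\cdot\widehat{g}_2\cdot(\delta_1\cdots\delta_m)$ to get $\widehat{g}_2 = e$, which your earlier observation already rules out.
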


\subsection{Construction of $\mu_\tau$}\label{section:construction-of-mutau}

Let $G$ be an ICC group equipped with an irreducible probability measure $\mu$.

By Lemma \ref{lemma:simple-records}, we fix a probability measure $ p $ on the natural numbers $ \mathbb{N} $ such that $ p(i) > 0 $ for all $ i \in \mathbb{N} $ and records are eventually simple. We also recall our gauge function $\Phi$ from Lemma \ref{lemma:gauge-ii}.

We inductively define a sequence of stopping times $ \tau _{i}$ for $ i \geq 0 $. First we let $ \tau _{0} = 1 $, and given $ \tau _{0}, ..., \tau _{k-1} $ we define $ \tau _{k} $ as follows. We inductively keep track of a collection of finite subsets $ \left(S _{i,j}\right) _{i,j \geq 1} $ of $ G $ such that $ \mu _{\tau _{i}} (S _{i,j}) > 1-\Phi(j) ^{-1} 2 ^{-j}p_i $ and $ S _{i, j-1} \subseteq S _{i,j}$. We define this collection by choosing $ \left(S_{i,k}\right) _{i\geq 1}$ after choosing $ \tau _{k}$. Also let $ S _{0,j} = \{ e \} $. 

	We let $ \tau _{k} $ be the stopping time coming from Lemma \ref{lemma:stopping-at-switcher} taking input the finite set 
\[ \left(\bigcup _{0 \leq i \leq k-1} S _{i,k} \right) ^{5\Phi(k)}.\] 

Denote by $ A _{i} $ the set inside the parentheses in the previous expression. Letting $ \Sigma_i := S_{i,i}$, we see from Lemma \ref{lem:switching-ladder} that the scale $ (\Phi, \Sib, \Ab) $ is a ladder.

We define a randomized stopping time $\tau$ as a mixture of the stopping times $(\tau_i)_{i\geq 0}$ weighted by $p$. In other words, we have \[ \mu _{\tau}= \sum_{i\geq0} p(i) \mu _{\tau _{i}}.\] 

\subsection{Convergence to the boundary}

Since the scale $ (\Phi, \Sib, \Ab) $ is a ladder, by Lemma \ref{lem:switching-ladder} we can consider the associated ladder forest $\mathcal{F}$. We show that the $\mu_\tau$-random walk almost surely converges to the boundary of this forest. This is a very slight modification of the proof of \cite[Theorem 3.1]{erschler2023arboreal}, included mainly for the convenience of the reader.

\begin{proposition}\label{prop:convergence} 
	For any $ g \in G $, the $\mu _{\tau}$-random walk started at $ g $ almost surely converges to the boundary of the forest $ \partial \mathcal{F} $. In addition, almost every sample path visits all but finitely many points in the corresponding geodesic ray in $ \mathcal{F} $.
	  \end{proposition}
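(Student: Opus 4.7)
The plan is to follow the strategy of \cite[Theorem 3.1]{erschler2023arboreal}: using the record structure of the auxiliary i.i.d.\ indices driving the mixture $\mu_\tau$, I would identify, for almost every sample path, a consistent family of spike decompositions along record times, and show that the corresponding sequence of prefixes is an infinite descending ray in $\mathcal{F}$ that exhausts the geodesic ray to the limit point.

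Write $g_j$ for the $j$th increment of the $\mu_\tau$-walk, sampled by first drawing $X_j\sim p$ and then $g_j\sim \mu_{\tau_{X_j}}$, and let $(T_k,R_k)$ denote the record times and record values of $(X_j)_{j\geq 1}$. By Lemma~\ref{lemma:simple-records}, almost surely $R_{k+1}>R_k$ for $k$ large, and by Lemma~\ref{lemma:gauge-ii}, $T_{k+1}<\Phi(R_k)$ almost surely for $k$ large. On the intersection of these events, $X_j<R_k$ strictly for every $j<T_{k+1}$ with $j\neq T_k$, and there are at most $\Phi(R_k)$ such indices. Next, I would apply Borel--Cantelli using the tail bound $\mu_{\tau_i}(S_{i,j})>1-\Phi(j)^{-1}2^{-j}p_i$: at record $k$, the event that $g_{T_k}\notin \Sigma_{R_k}$, or that some $g_j$ with $j<T_{k+1}$, $j\neq T_k$, and $X_j\geq 1$ fails to lie in $\Sigma_{X_j}\subseteq \Delta_{R_k}$, has probability on the order of $\Phi(R_k)\cdot \Phi(R_k)^{-1} 2^{-R_k}=2^{-R_k}$. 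Since $R_k\geq k$ eventually, this is summable in $k$. The finitely many exceptional factors (the starting element $g$ and the $X_j=0$ increments) are absorbed by arranging the $S_{i,j}$ to eventually contain $g$ and $\supp\mu$ inside some $A_i$, which is compatible with the freedom in the construction.

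On this full-measure event, for each $k$ large and each $j\in[T_k,T_{k+1})$ the factorization
\[ w_j = \bigl(g\cdot g_1\cdots g_{T_k-1}\bigr)\cdot g_{T_k}\cdot \bigl(g_{T_k+1}\cdots g_j\bigr) \]
is a spike decomposition of $w_j$: the spike $g_{T_k}$ lies in $\Sigma_{R_k}$, and both the prefix and postfix are products of at most $\Phi(R_k)$ elements of $\Delta_{R_k}$. Because the scale is a ladder (Lemma~\ref{lem:switching-ladder}), this decomposition is unique, so $w_{T_k-1}$ is the parent of $w_j$ in $\mathcal{F}$. It follows that $(w_{T_k-1})_{k\geq k_0}$ is an infinite descending path in $\mathcal{F}$, defining a unique point $\xi\in\partial\mathcal{F}$, and since every $w_n$ with $n\in[T_k,T_{k+1})$ is a descendant of $w_{T_k-1}$ along this path, $w_n\to\xi$. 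The second conclusion is immediate: the points $w_{T_k-1}$ lie at successive depths on the geodesic ray toward $\xi$ and are visited by the sample path at times $T_k-1$, so all but finitely many vertices on that ray are realized.

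The hard part will be the Borel--Cantelli bookkeeping: one has to verify that the $\Phi(j)^{-1}$ factor in the tail bound for $S_{i,j}$ combines with the gauge bound $T_{k+1}<\Phi(R_k)$ to produce a per-record failure probability that is summable in $k$, and simultaneously arrange the $S_{i,j}$ large enough that the starting element $g$ and the finitely many $X_j=0$ increments lying in $\supp\mu$ always belong to $\Delta_{R_k}$ for $k$ large, so that the spike decomposition really is a valid one.
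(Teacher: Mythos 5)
Your proposal follows the same strategy as the paper, mirroring \cite[Theorem~3.1]{erschler2023arboreal}: record times and the gauge lemma, a Borel--Cantelli step, and the spike decomposition with spike $g_{T_k}$ and prefix $w_{T_k-1}$. The overall structure and conclusions are right.

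However, there is a genuine gap in the Borel--Cantelli step. You require $g_j\in\Sigma_{X_j}=S_{X_j,X_j}$ for every $j<T_{k+1}$ with $j\neq T_k$. But $\Sigma_{X_j}$ is a fixed finite set that does not depend on $k$, so $\mathbb{P}(g_j\notin\Sigma_{X_j})$ is a positive constant (a finite set cannot carry all of $\mu_{\tau_{X_j}}$, whose support is infinite). Since the pairs $(X_j,g_j)$ are i.i.d., the second Borel--Cantelli lemma gives almost surely infinitely many $j$ with $g_j\notin\Sigma_{X_j}$; and because every window $\{j<T_{k+1}\}$ contains $j=1$, the event you want fails for \emph{every} $k$. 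Your displayed per-record bound $\Phi(R_k)\cdot\Phi(R_k)^{-1}2^{-R_k}$ is actually the estimate for the event $\{g_j\in S_{X_j,R_k}\}$, where the second index is $R_k$ rather than $X_j$: it is the nesting $S_{i,j-1}\subseteq S_{i,j}$, which makes the target set grow with $R_k$, that produces the decaying failure probability $\Phi(R_k)^{-1}2^{-R_k}p_{X_j}$ and makes the Borel--Cantelli sum converge. The fix is therefore to replace $\Sigma_{X_j}$ by $S_{X_j,R_k}$ throughout, as the paper does, and to check the resulting inclusion $S_{X_j,R_k}\subseteq\Delta_{R_k}$ via the nesting and the definition of $A_n$ (rather than the immediate but unusable $\Sigma_{X_j}\subseteq\Delta_{R_k}$). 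With that change the rest of your argument goes through.
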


\begin{proof}
	We sample the $ \mu_\tau $-random walk in the following way. First we draw $ (X _{i}) _{i \geq 1}  $ i.i.d. according to $ p $. Then, conditional on the $ X _{i}$'s, we sample $ g _{i}\sim \mu _{\tau _{X_i}} $. By Lemmas \ref{lemma:simple-records} annd \ref{lemma:gauge-ii}, there almost surely exists $ k _{0} $ such that for all $ k \geq k_0 $, the $ k $th record time satisfies $ T _{k+1} \leq \Phi(R_{k})$ and the record $ R_k $ is simple.

       Let $ E_k$ be the event that for every $ 1 \leq i \leq \Phi(k) $, the increment $ g _{i}  $ lies in $ S _{X_i, k}$. Then we have $ \mathbb{P}(E_k ^{c}) \leq 2^{-k} $. In addition, $ g_{T_k}$ lies in $ \Sigma_{R_k}$ with probability at least $ 1-p_{R_k}2^{-R_k} $. By Borel-Cantelli, almost surely for all $ k $ sufficiently large the event $ E _{k} $ holds and $ x_{T_k} \in \Sigma_{R_k}$. Increasing $ k_0 $ if necessary, suppose this holds for all $ k \geq k_0$. 

	Examine the random walk at time $n$ satisfying $ T_k \leq n \leq T_{k+1}-1$.
	\[ w _{n} = \underbrace{g _{1} ... g _{T _{k} -1}}_{a} g _{T _{k}} \underbrace{g _{T _{k}+1} ... g _{n}}_{b}   .\] 

	Then each of $ a $ and $ b $ is a product of at most $ \Phi(R_k) $ elements from $ \Delta_{R_k}$, hence $ a g _{T _{k}} b$ is a spike decomposition of $ w_n $ with spike $ g _{T _{k}} $ and prefix $ a = w _{T_k-1}$. As this holds for all $ k \geq k_0 $, the sequence $ \left(w _{T_k-1}\right) _{k \geq k_0}$ is a geodesic ray in $ \mathcal{F}$, $ w_n $ converges to the boundary of $ \mathcal{F} $ and $ w_n $ intersects all but finitely many points of $\left(w _{T_k-1}\right) _{k \geq k_0}$. 

	Now suppose we start the random walk at $ g \in G $. Since $ g \in \Delta_\ell $ for $ \ell $ sufficiently large, and almost surely infinitely many of the increments $ g_i $ are equal to the identity as $ \mu(e), p(0) > 0 $, the previous argument shows that increment $ gw_n = gag_{T_k}b$ is still spiked with prefix $ ga $ and spike $ g_{T_k}$ for $ k $ sufficiently large.   
\end{proof}

Using the ``trunk criterion" of Erschler and Kaimanovich, we can show the following 

\begin{corollary}\label{cor:PB-identification}
	Let $ \nu $ be the hitting measure on the boundary of the forest $ \mathcal{F}  $. Then $ (\partial \mathcal{F}, \nu) $ is a model for the Poisson boundary of $ (G, \mu _{\tau})$. 
	  \end{corollary}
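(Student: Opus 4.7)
The plan is to apply the ``trunk criterion'' of Erschler--Kaimanovich \cite{erschler2023arboreal}, which identifies the hitting measure on the boundary of a $G$-equivariant forest with the Poisson boundary, provided two hypotheses hold: the random walk almost surely converges to $\partial \mathcal{F}$ in the induced bordification, and almost every sample path eventually visits all but finitely many vertices of the geodesic ray in $\mathcal{F}$ to which it converges. Both of these hypotheses have already been established in Proposition \ref{prop:convergence}, so the only remaining content is to verify the equivariance hypothesis and then appeal to the criterion.

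The first step is to observe that the ladder forest $\mathcal{F}$ is $G$-equivariant under the left action. The sequences $\Sib$ and $\Ab$ defining the scale are fixed subsets of $G$ and do not depend on a basepoint, so if $(\overleftarrow{g}, \widehat{g}, \overrightarrow{g})$ is a spike decomposition of $g$, then $(h\overleftarrow{g}, \widehat{g}, \overrightarrow{g})$ is a spike decomposition of $hg$. Hence left translation by any $h \in G$ permutes the edges of the despiking graph and induces a homeomorphism of $\partial \mathcal{F}$ that is compatible with the dynamics of the random walk. In particular, $(\partial \mathcal{F}, \nu)$ is a measurable $G$-equivariant quotient of the path space, and is thus a $\mu_\tau$-boundary.

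The second step is to invoke the trunk criterion: feeding it the two conclusions of Proposition \ref{prop:convergence} (convergence to $\partial\mathcal{F}$ and the trunk property) upgrades the $\mu_\tau$-boundary $(\partial\mathcal{F}, \nu)$ to the full Poisson boundary of $(G, \mu_\tau)$.

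The main (and really the only) obstacle is a bookkeeping check that our strengthened ladder definition, requiring $\Sigma_n$ disjoint from $\Delta_n^{3\lambda(n)}$ rather than the weaker condition used in \cite{erschler2023arboreal}, fits into the hypotheses of the trunk criterion stated there; since every ladder in our sense is in particular a ladder in theirs, and since the trunk criterion imposes no additional moment or entropy assumptions on the driving measure, the criterion applies verbatim and the corollary follows.
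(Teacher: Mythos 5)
Your overall strategy matches the paper's: both establish convergence to $\partial\mathcal{F}$ and the trunk property via Proposition~\ref{prop:convergence}, and then invoke Proposition~3.8 of Erschler--Kaimanovich. The bookkeeping remark that the paper's strengthened ladder definition implies the one in \cite{erschler2023arboreal} is reasonable and harmless.

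However, the equivariance step is both unnecessary and wrong. You claim that if $(\overleftarrow{g}, \widehat{g}, \overrightarrow{g})$ is a spike decomposition of $g$ then $(h\overleftarrow{g}, \widehat{g}, \overrightarrow{g})$ is a spike decomposition of $hg$. This fails: a spike decomposition with spike $\widehat{g}\in\Sigma_n$ requires the prefix to be a product of at most $\lambda(n)$ elements of $\Delta_n$, and $h\overleftarrow{g}$ has no reason to satisfy this for arbitrary $h\in G$ --- nothing constrains $h$ to lie in $\Delta_n^{\lambda(n)}$. Consequently the ladder forest $\mathcal{F}$ is \emph{not} $G$-equivariant as a graph, and left translation does not permute the despiking edges. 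Indeed this non-equivariance is precisely what makes the Erschler--Kaimanovich construction interesting; the paper itself quotes their remark that the resulting boundaries have no common underlying space. The trunk criterion is a measure-theoretic statement and does not presuppose a topological $G$-action on the forest: the limit map from the path space to $\partial\mathcal{F}$ descends to the Poisson boundary for purely measurable reasons (finite-shift-equivalent paths converge to the same end), and Proposition~3.8 of \cite{erschler2023arboreal} supplies the rest. Dropping your equivariance paragraph entirely recovers the paper's correct and shorter argument.
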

\begin{proof}
	By Proposition \ref{prop:convergence}, for $ \nu $-a.e. $ \xi \in \partial \mathcal{F}$, the conditional random walk almost surely hits all but finitely many points on the unique geodesic in $ \mathcal{F} $ converging to $ \xi $. In the language of \cite{erschler2023arboreal}, this implies the geodesic forms a trunk for the conditional random walk, so by \cite[Proposition 3.8]{erschler2023arboreal}, the space $ (\partial \mathcal{F}, \nu )$ is the Poisson boundary of $ (G, \mu _{\tau}) $. 
\end{proof}

\section{Proof of Theorem \ref{thm:always-bigger}}\label{section:final-proofs} 

Let $G$ be a countable group with an ICC quotient, equipped with an irreducible probability measure $\mu$. Without loss of generality, we may assume that $ G $ itself has the ICC property. Indeed, any group with an ICC quotient carries a canonical normal subgroup $H$, known as the hyper-FC-Centre, with the property that $G/H$ is ICC \cite[Proposition 2.2]{frisch2018non} and that for any irreducible measure $\mu$, the Poisson boundaries of $(G,\mu)$ and $(G/H, \pi_*\mu)$ coincide (see \cite[Proposition 5.1]{erschler2023arboreal}, \cite[Lemma 4.7]{jaworski2004countable}), where $\pi: G \to G/H$ is the quotient map. Given a randomized stopping time $\tau$ for the $\pi_*\mu$-random walk on the quotient group, we may pull back to $G$ to get a randomized stopping time for the $\mu$-random walk.

To deduce Theorem \ref{thm:always-bigger}, it suffices to exhibit a bounded $ \mu _{\tau}$-harmonic function on $ G $ that is not $ \mu $-harmonic. 
Recall that we can sample the $i$th step of our $ \mu _{\tau} $ random walk by choosing $X _{i} \sim p$ and then choosing $ w _{i} \sim \mu _{\tau _{X_i}} $.

Given a natural number $n \in \mathbb{N}$, we can construct a $ \mu _{\tau} $-harmonic function as follows: given the limiting boundary point $ \xi \in \partial \mathcal{F} $ we ask what is the conditional probability that the first step of the $ \mu _{\tau} $-random walk was drawn from the measure $ \mu _{\tau _{n}} $. This produces a bounded function on $ \partial\mathcal{F} $, which induces a $ \mu _{\tau} $-harmonic function on $ G $ by integrating against the $ \mu _{\tau}$-stationary hitting measure $ \nu $. Explicitly, this produces the $\mu_\tau$-harmonic function $f_n(g) = \int _{\partial \mathcal{F}} \mathbb{P} ^{\xi} (X _{1} = n) d g _{*} \nu(\xi)$. Here, $ \left(\mathbb{P} ^{\xi}\right)_{\xi \in \partial \mathcal{F}} $ are the measures coming from the disintegration $ \mathbb{P} = \int _{\partial \mathcal{F}} \mathbb{P} ^{\xi} d \nu (\xi) $ \cite{rohlin1967lectures, kaimanovich2000poisson}. However, we find it convenient to use a slightly different $\mu_\tau$-harmonic function.

If the supports of $ \left(\mu _{\tau _{i}}\right) _{i \geq 0} $ were pairwise disjoint, then we could express these conditional probabilities using the Doob $h$-transform to see that 
 \begin{align*} 
	\mathbb{P} ^{\xi}(X_1=n) &= \sum_{y \in \supp \mu _{\tau _{n}}}\mathbb{P} ^{\xi} (w_1=y) \\
				 &= \sum_{y \in G} p(n) \frac{d y _{*} \nu}{d \nu} (\xi) \mu _{\tau _{n}} (y)
  \end{align*}

The supports of $ \left(\mu _{\tau _{i}}\right) _{i \geq 0} $ need not be disjoint (say, if $\mu_0$ is fully supported), though the latter expression still allows us to define a bounded $\mu_\tau$-harmonic function. Explicitly, our choice of bounded $\mu_\tau$-harmonic function is 
\[ h _{n} (g) := p(n) \cdot \int _{\partial \mathcal{F} } \sum_{y \in G} \frac{dy_*\nu}{d\nu}(\xi) \mu _{\tau _{n}} (y) dg_*\nu(\xi) .\]



  Observe that $ h _{n} (e) = p(n) $ as $\nu$ is $\mu_\tau$-stationary. If $ h_n $ was also $ \mu $-harmonic, then the Optional Stopping Theorem would imply that the average $ \sum_{g \in G} \mu _{\tau _{n}} (g) h_n(g) $ is also equal to $ p(n) $. We will show that this average is at least $ 1-o _{n} (1) $, which is strictly greater than $ p(n) $ for $ n $ sufficiently large. This will be a consequence of the following proposition, proven in Section \ref{section:conditionally-stable}. Recall the sets $S_{n,n}$ defined in Section \ref{section:construction-of-mutau}.
  
\begin{proposition}\label{lemma:conditionally-stable}
	Let $ \mathcal{F} _{n} $ be the collection of subtrees consisting of descendants of elements of the form $ gw $ where $ g \in S _{n,n} $ and $ w \in \Delta _{n} ^{\Phi(n)} $. Then for any $ g \in S _{n,n} $, we have $ g _{*} \nu ( \partial \mathcal{F} _{n}) \geq 1- o_n(1) $.
	  \end{proposition}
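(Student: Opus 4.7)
The plan is to track the spike decomposition of the random walk starting at $g \in \Sigma_n$, and to identify a specific element on the ancestor chain of the walk's limit that is the root of a subtree in $\mathcal{F}_n$. Let $(X_i)_{i \geq 1}$ be the auxiliary samples from $p$ and $g_i \sim \mu_{\tau_{X_i}}$ the increments. Define $k^* := \min\{k \geq 0 : R_k > n\}$ as the first record index with value exceeding $n$, and $\sigma_n := T_{k^*}$; both are a.s.\ finite. I would show that $gw_{\sigma_n - 1}$ is the root of a subtree in $\mathcal{F}_n$ that contains the entire walk from time $\sigma_n$ onward.

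The first step is to introduce a ``good event'' $\mathcal{G}_n$ and bound $\mathbb{P}(\mathcal{G}_n^c) = o_n(1)$. The event $\mathcal{G}_n$ would bundle: (a) records are simple, $T_{k+1} \leq \Phi(R_k)$, and $g_{T_k} \in \Sigma_{R_k}$ for all $k$ beyond some random $k_0$ (standard Borel--Cantelli, using Lemmas \ref{lemma:simple-records} and \ref{lemma:gauge-ii}); (b) $k^* > k_0$, so the gauge applies from index $k^* - 1$ onwards; (c) $R_{k^* - 1} < n$ strictly (vacuous if $k^* = 0$); (d) the event $E_{n-1}$ holds, i.e., $g_i \in S_{X_i, n-1}$ for $i \leq \Phi(n-1)$; (e) $E_K$ holds for all $K \geq n$. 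For the probability bound, (d) and (e) are covered by $\mathbb{P}(E_K^c) \leq 2^{-K}$ summed from $K = n-1$; (b) follows by choosing a slowly growing threshold $K_n$ with $\mathbb{P}(k_0 > K_n) + \mathbb{P}(R_{K_n} > n) \to 0$; and (c) reduces, via the Markov structure of the record chain, to the identity $\mathbb{P}(R_{k^* - 1} = n) = p(n)/\sum_{j \geq n} p(j)$, which tends to $0$ because eventual simplicity of records forces this ratio to vanish.

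Working under $\mathcal{G}_n$, the central structural claim is that $gw_{\sigma_n - 1}$ admits the unique spike decomposition $e \cdot g \cdot w_{\sigma_n - 1}$, with spike $g \in \Sigma_n$, prefix $e$, and postfix $w_{\sigma_n - 1} \in \Delta_n^{\Phi(n)}$. The postfix assertion follows from (c), which forces $X_i \leq n - 1$ for $i < \sigma_n$; combined with (d), each $g_i \in S_{X_i, n-1} \subseteq \Delta_n$, and the length is bounded by $\sigma_n - 1 \leq \Phi(R_{k^* - 1}) \leq \Phi(n - 1) \leq \Phi(n)$. Uniqueness follows from the ladder property (Lemma \ref{lem:switching-ladder}): any competing candidate spike has level $\leq n$, which would force $g \in \Sigma_n$ to lie inside $\Delta_n^{3\Phi(n)}$, contradicting $\Sigma_n \cap \Delta_n^{3\Phi(n)} = \emptyset$. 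Next, arguing exactly as in Proposition \ref{prop:convergence}, for $k \geq k^*$ (so that $R_k > n$ and hence $g \in \Delta_{R_k}$) and $m \in [T_k, T_{k+1})$, the spike decomposition of $gw_m$ has prefix $gw_{T_k - 1}$; iterating down the record indices, the ancestor chain of $gw_m$ passes through $gw_{\sigma_n - 1}$. Therefore the walk enters and remains in the subtree rooted at $gw_{\sigma_n - 1}$ from time $T_{k^*}$ onward, and since $g \in S_{n,n}$ and $w_{\sigma_n - 1} \in \Delta_n^{\Phi(n)}$, the walk's limit lies in $\partial \mathcal{F}_n$.

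The main obstacle I anticipate is condition (c): if a record can hit $n$ exactly with non-vanishing probability, then the postfix $w_{\sigma_n - 1}$ would include the factor $g_{T_{k^* - 1}} \in \Sigma_n$, which does not lie in $\Delta_n$ and invalidates the key spike decomposition. Controlling this reduces to the tail behaviour $p(n)/\sum_{j \geq n} p(j) \to 0$, which I would derive from the Borel--Cantelli criterion underlying the eventual simplicity of records in Lemma \ref{lemma:simple-records}; aligning the various index thresholds so that (a)--(e) hold simultaneously on a single event of probability $1 - o_n(1)$ is the most delicate bookkeeping.
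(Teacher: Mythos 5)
Your proposal is correct and follows essentially the same approach as the paper: a high-probability good event bundling Borel--Cantelli control of the $E_k$ events, eventual record simplicity with the gauge bound $T_{k+1} < \Phi(R_k)$, and the crucial record-skipping fact that $n$ is almost never a record value, followed by a spike-decomposition argument showing the walk enters and stays in a subtree of $\mathcal{F}_n$ rooted at $gw_{\sigma_n - 1}$. The one small difference is that the paper proves the record-skipping estimate $\mathbb{P}(n \in (R_k)_{k\geq 1}) \to 0$ directly from eventual record simplicity as Lemma \ref{lemma:record-skipping} (a self-contained contradiction argument that does not assume $\sum_i p(i,i)^2 < \infty$), whereas you invoke the Borel--Cantelli sufficient condition underlying Lemma \ref{lemma:simple-records}, which works for the specific $p$ chosen but is not quite what the record-skipping lemma hypothesizes.
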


Given this proposition, we perform a little bit of averaging.
Expanding the definition of $ h_n(g)$, swapping a sum and integral, and changing variables gives us
\[ \sum_{g \in G} \mu _{\tau _{n}} (g) h_n(g) = p(n) \cdot \int _{\partial \mathcal{F}} \left(\sum_{g' \in G} \mu _{\tau _{n}} (g') \frac{dg' _{*} \nu}{d\nu}(\xi)\right)  \left(\sum_{g \in G} \mu _{\tau _{n}} (g) \frac{dg _{*} \nu}{d\nu}(\xi)\right) d\nu(\xi) .\] 

Writing $ \nu _{n} = \mu _{\tau _{n}} *\nu $, the right side is none other than $ p(n) \cdot \norm{\frac{d\nu _{n}}{d\nu}} ^{2} _{2} $, so our goal is to show that $ \norm{\frac{d\nu _{n}}{d\nu} } _{2} ^{2} \geq (1-o _{n} (1))/p(n) $.

With probability $ (1-o_n(1))p(n) $ the first increment of the $ \mu _{\tau} $-random walk is drawn from $ S_{n,n}$, which by Proposition \ref{lemma:conditionally-stable} implies that $ \nu( \partial \mathcal{F} _{n}) \geq (1-o_n(1))p(n)$. Let $ f _{n} = \frac{d\nu _{n}}{d\nu} $. Then also from Proposition \ref{lemma:conditionally-stable}, we have $ \norm{f _{n} | _{\partial \mathcal{F} _{n}}}_1 \geq 1-o_n(1) $.

Applying Cauchy-Schwarz to $ f_n\cdot 1_{\partial \mathcal{F}_{n}} $, we see that \[ 1-o_n(1) \leq  \norm{f _{n} | _{\partial \mathcal{F} _{n}}}_1 \leq \norm{f_n} _{2} \norm{1 _{\mathcal{F} _{n}}} _{2}  = \norm{f_n} _{2} (1-o_n(1)) \sqrt{p (n)},\] which implies $ \norm{f_n} _{2} ^{2} \geq (1-o_n(1))/p(n)  $.

As a result, we have \[ \sum_{g \in G} \mu _{\tau _{n}} (g) h_n(g) > h_n(e) \]

for $n$ sufficiently large, so we can conclude that the $\mu_\tau$-harmonic function $h_n$ is not $\mu$-harmonic. Therefore the Poisson boundary of $(G,\mu_\tau)$ is strictly larger than that of $(G,\mu)$.
\qed

\begin{remark}
    It is natural at this point to ask for which bounded functions $f: G \to \mathbb{R}$ there exists a randomized stopping time $\tau$ such that $f$ is $\mu_\tau$-harmonic. One can modify our argument, letting $p(0)$ be sufficiently small and replacing $\Phi$ with a sufficiently quickly growing function, to show that with probability close to 1, the $\mu_\tau$-random walk started at any fixed $g$ will forever lie in a subtree of the $(\Phi, \Sigma, A)$-ladder forest rooted at $g$. As a result, one can express any bounded function on $G$ as the pointwise limit of a sequence of bounded functions which are harmonic with respect to some randomized stopping time transform of $\mu$.
\end{remark}

\subsection{Proof of Proposition \ref{lemma:conditionally-stable}}\label{section:conditionally-stable}

We start with the following lemma about record times.

\begin{lemma}\label{lemma:record-skipping}
	Let $ p $ be a probability measure on $ \mathbb{N} $ which has eventually simple records. Then $ \mathbb{P}(n \in (R_{_k})_{k \geq 1} ) \to 0 $ as $ n \to \infty $. 
	  \end{lemma}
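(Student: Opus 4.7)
The plan is to reduce the lemma to showing $q_n := p(n)/\mathbb{P}(X \geq n) \to 0$ as $n \to \infty$ (where $X \sim p$), and then obtain this limit by combining eventually simple records with a conditional Borel--Cantelli step and a reverse Fatou contradiction. The reduction rests on the identity $\mathbb{P}(n \text{ is a record value}) = q_n$. To see it, set $I_n := \inf\{i : X_i \geq n\}$; the key observation is that $n$ appears as a record value if and only if $X_{I_n} = n$. One direction is immediate, since by minimality of $I_n$ all earlier $X_j$ are strictly less than $n$, so $X_{I_n} = n$ would be a record. For the converse, if $X_i = n$ is a record, then $X_j \leq n$ for $j < i$, forcing $i \geq I_n$, and then $X_{I_n} \leq X_i = n$ combined with $X_{I_n} \geq n$ forces $X_{I_n} = n$. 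A direct calculation conditioning on $I_n$ gives $\mathbb{P}(X_{I_n} = n) = p(n)/\mathbb{P}(X \geq n) = q_n$; note this is also the one-step transition probability of the record-value Markov chain staying at $n$.

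The next step is to use eventually simple records to show $q_{R_k} \to 0$ almost surely. Let $A_k := \{R_{k+1} = R_k\}$. By the Markov property of the record chain, $\mathbb{P}(A_k \mid R_1, \ldots, R_k) = q_{R_k}$. By L\'evy's conditional Borel--Cantelli lemma,
\[ \{A_k \text{ i.o.}\} = \Bigl\{ \sum_k q_{R_k} = \infty \Bigr\} \quad \text{a.s.} \]
Eventually simple records means the left-hand side has probability zero, so $\sum_k q_{R_k} < \infty$ almost surely, and in particular $q_{R_k} \to 0$ almost surely.

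For the conclusion, I would argue by contradiction. If $q_n \not\to 0$, there exist $\epsilon > 0$ and an infinite $S \subseteq \mathbb{N}$ with $q_n \geq \epsilon$ for all $n \in S$. Writing $B_n := \{n \text{ is a record value}\}$, the identity gives $\mathbb{P}(B_n) = q_n \geq \epsilon$ for $n \in S$. Applying the reverse Fatou lemma to $\mathbf{1}_{B_n}$ along $n \in S$,
\[ \mathbb{P}\bigl( B_n \text{ occurs for infinitely many } n \in S \bigr) \;\geq\; \limsup_{n \in S,\, n \to \infty} \mathbb{P}(B_n) \;\geq\; \epsilon \;>\; 0. \]
On this positive-probability event, $q_{R_k} \geq \epsilon$ for infinitely many $k$, contradicting the previous step. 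The main obstacle is spotting the clean identity $\mathbb{P}(n \text{ is a record}) = q_n$, which converts the asymptotic tail statement into a deterministic one about the transition probabilities of the record-value chain; after that, the rest is a routine combination of conditional Borel--Cantelli and Fatou for sets.
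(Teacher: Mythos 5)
Your proof is correct, and it shares the two key ingredients with the paper's argument: the identity $\mathbb{P}(n \text{ is a record value}) = q_n := p(n)/\mathbb{P}(X \geq n)$ (which the paper uses implicitly rather than isolating), and a reverse Fatou / second Borel--Cantelli step combined with the eventually-simple-records hypothesis. The difference is organizational: the paper skips your intermediate almost-sure statement $q_{R_k} \to 0$ (and hence L\'evy's conditional Borel--Cantelli) entirely. Instead it observes, via the Markov property of the record chain, that $\mathbb{P}(n \text{ is a \emph{tied} record}) = q_n^2$, so if $q_{n_i} \ge \delta$ along an infinite subsequence, then $\mathbb{P}(n_i \text{ is a tied record}) \ge \delta^2$ for each $i$, and a single application of reverse Fatou gives a positive-probability event on which infinitely many non-simple records occur, contradicting the hypothesis. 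Your route is sound and gives the slightly stronger intermediate fact $\sum_k q_{R_k} < \infty$ a.s., but for the stated lemma the L\'evy BCL step is an avoidable detour; squaring $q_n$ and applying Fatou once does the job.
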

	  \begin{proof}
		  If not, then there exists some $ \delta>0 $ and infinitely many $ n_1 < n_2 < ... $ such that $ \mathbb{P}(n_i \in \left( R _{k} \right) _{k \geq 1}  ) \geq \delta$. For each such $ n_i $, with probability at least $ \delta ^{2} $, there are two consecutive record times $ T _{k} < T _{k+1} $ such that $ R _{k} = n_i = R _{k+1} $. Hence with positive probability infinitely many $ n_i $ are non-simple records.
	  \end{proof}

\begin{proof}[Proof of Proposition \ref{lemma:conditionally-stable}]
	Given $ \varepsilon > 0 $, let $ (X_i) _{i \geq 1} $ be i.i.d. samples from $ p $ and let $ (g _{i}) _{i \geq 1} $ independent with $g_i \sim \mu _{\tau_{X_i}} $. For $ k_0 $ sufficiently large depending on $ \varepsilon$ and $ n $ sufficiently large depending on $ k_0$, the following holds with probability at least $ 1- \varepsilon $.

	\begin{enumerate} 
		\item $ n $ is not a record value (by Lemma \ref{lemma:record-skipping})
		\item at least one of $ X_1, ..., X _{\sigma} $ is equal to $ 0 $, where $ \sigma = \inf\{t \geq 1, X_t \geq n\}$ (as $ p(0)>0 $).
		\item For all $ k \geq k_0 $, the $ k $th record $ R _{k} $ is simple and $ T _{k+1} \leq \Phi(R_{k}) $ (by Lemma \ref{lemma:simple-records}).
		\item For all $ k \geq k_0 $, for all $ 1 \leq i \leq \Phi(R_k) $, the increment $ g _{i} $ is drawn from $ S _{X_i, k} $.
		\item $ n $ is larger than $ R_{k_0} $.
		  \end{enumerate}

		  Let $ \{ w _{m}  \} _{m \geq 1}  $ be a sample path on this event. We claim that for every $ m \geq 1 $ and $g \in S_{n,n}$, the element $ g w _{m} $ is spiked and lies in $ \mathcal{F}_n$. Pick $ k$ such that $ R_k < n < R_{k+1} $. 

Then for times $ m \leq T_{k+1}-1 $, since $ w_m $ is a product of at most $ \Phi(R_k) \leq \Phi(n) $ increments drawn from $ \Delta_{R_k} \subset \Delta_n $, the element $ gw_m $ is spiked with spike $ g $, hence it lies in $ \mathcal{F}_{n}$.

For times $ m \geq T_{k+1} $, choose $ k' $ minimal such that $T_{k'} \leq m \leq T _{k'+1} - 1 $. As at least one of $ g_1,...,g_{T_{k'}-1} $ is trivial, then $ gw_{T_{k'}} $ is a product of at most $ \Phi(R_{k'}) $ increments drawn from $ \Delta_{R_{k'}}$, so that $ gw_m $ is still spiked with prefix $ gw_{T_{k'}-1} $ and so $ gw_m $ lies in $ \mathcal{F}_n$.
\end{proof}

\subsection*{Acknowledgments}
This work was done during a visit of the first author to UC San Diego. We would like to thank Inhyeok Choi, Eduardo Silva, and Anush Tserunyan for helpful comments, and would especially like to thank Vadim Kaimanovich for a great many helpful suggestions. We would also like to thank Behrang Forghani for sharing references on the relation between stopping times and the Poisson boundary. The first author was supported by a PGS-D Award from NSERC. The second author was supported by NSF Grant DMS 2348981.

\end{document}